\DeclareMathAlphabet{\mathpzc}{OT1}{pzc}{m}{it}
\newcommand\CS{{\rm CS}}
\newcommand\SOt{\mathcal{SO}}
\newcommand\sutwo{\mathfrak{su}(2)}
\newcommand\B{\mathcal{B}}
\newcommand\A{\mathcal{A}}
\newcommand\G{\mathcal{G}}
\newcommand\Gzero{\G_0}
\newcommand\Bzero{\B_0}
\newcommand\energy{\mathcal{E}}
\newcommand\gr{\rm gr}
\numberwithin{equation}{section}
\newcommand{\bQ}{{\bf Q}}
\newcommand{\bR}{{\bf R}}
\newcommand{\bZ}{{\bf Z}}
\newcommand{\fd}{{\mathfrak d}}
\newcommand{\fC}{{\mathfrak C}}
\newcommand{\fD}{{\mathfrak D}}
\newcommand{\fU}{{\mathfrak U}}
\newcommand{\Z}{\bZ}
\newcommand{\Q}{\bQ}
\newcommand{\R}{\bR}
\newcommand{\U}{{\rm U}}
\DeclareMathOperator{\Ad}{Ad}
\DeclareMathOperator{\Hom}{Hom}
\DeclareMathOperator{\tr}{tr}
\renewcommand{\epsilon}{\varepsilon}
\def\({\mathopen{}\left(}
\def\){\right)\mathclose{}}
\def\<{\mathopen{}\left<}
\def\>{\right>\mathclose{}}
\definecolor{gold}{rgb}{0.85,.66,0}
\definecolor{cherry}{rgb}{0.9,.1,.2}
\definecolor{burgundy}{rgb}{0.8,.2,.2}
\definecolor{orangered}{rgb}{0.85,.3,0}
\definecolor{orange}{rgb}{0.85,.4,0}
\definecolor{olive}{rgb}{.45,.4,0}
\definecolor{lime}{rgb}{.6,.9,0}
\definecolor{green}{rgb}{.2,.7,0}
\definecolor{grey}{rgb}{.4,.4,.2}
\definecolor{brown}{rgb}{.4,.3,.1}
\def\makeautorefname#1#2{\AtBeginDocument{\expandafter\def\csname#1autorefname\endcsname{#2}}}
\newcommand{\mynewtheorem}[2]{
  \newaliascnt{#1}{equation}          
  \newtheorem{#1}[#1]{#2}
  \aliascntresetthe{#1}
  \makeautorefname{#1}{#2}
}
\numberwithin{substep}{step}
\numberwithin{subcase}{case}
\theoremstyle{remark}
\theoremstyle{definition}
\newtheorem*{convention*}{Convention}
\newtheorem*{conventions*}{Conventions}
\newcommand{\Addresses}{{
}}
\theoremstyle{plain}
\newtheorem{theorem-intro}{Theorem}
\newtheorem{conjecture-intro}[theorem-intro]{Conjecture}
\newtheorem{cor-intro}[theorem-intro]{Corollary}
\newtheorem{prop-intro}{Proposition}
\title{\large \bf Handle decomposition complexity and representation spaces}
\begin{document}

\author{Paolo Aceto%\thanks{The work of PA is supported by the European Union’s Horizon 2020 research and innovation programme under the Marie Sk\l odowska-Curie action, Grant No.\ 101030083 LDTSing. } 
 \hspace{1cm} Aliakbar Daemi%\thanks{The work of AD was supported by NSF Grants DMS-1812033, DMS-2208181 and NSF FRG Grant DMS-1952762NSF.} 
\hspace{.75cm} Jennifer Hom %\thanks{The work of JH was partially supported by NSF grant DMS-2104144 and a Simons Fellowship.} 
\hspace{1cm}\\[5mm]
Tye Lidman%\thanks{The work of TL was supported by NSF grants DMS-17909702, DMS-2105469, and a Sloan Fellowship.}
\hspace{1cm} JungHwan Park%\thanks{The work of JP was supported by Samsung Science and Technology Foundation(SSTF-BA2102-02) and the POSCO TJ Park Science Fellowship.} 
}
\date{}
\maketitle
\begin{abstract}
We prove that there are homology three-spheres that bound definite four-manifolds, but any such bounding four-manifold must be built out of many handles.  The argument uses the homology cobordism invariant $\Gamma$ from instanton Floer homology.
\end{abstract}
\maketitle
%%%%%%%%%%%%%%%%%%%%%%%%%%%%%%%%%%%
%!TEX root = Gamma-PHS.tex
\section{Introduction}
It is well known that every closed orientable three-manifold bounds a smooth, compact, orientable four-manifold \cite{Lickorish, Wallace}.  Consequently, it is natural to understand the minimal complexity of a bounding four-manifold.  One way to measure this is through the minimal number of handles in a handle decomposition.  If the first Betti number of the three-manifold is large, then it is easy to see that any handle decomposition of a bounding four-manifold is necessarily comparably large.  On the other hand, if the homology of a three-manifold is small, it can be difficult to show complexity in the handle decomposition of a bounding four-manifold, even if one expects this for three-manifolds which are complicated in other measures.  In~\cite{AGL}, it is shown that for certain rational homology spheres with finite non-trivial homology, all bounding rational homology balls must have many handles.  (This theorem is not vacuously true as their examples do bound rational balls.)

The goal of this paper is to exhibit examples of integer homology spheres requiring large handle decompositions of bounding four-manifolds with definite intersection forms, e.g.\ rational homology balls.  In this paper, we take the convention that all manifolds are smooth and a handle decomposition of a four-manifold with connected boundary $Y$ is a cobordism from the empty set to $Y$. Without loss of generality, we also focus on handle decompositions with one 0-handle and no 4-handles. An elementary but useful observation for us is that $\pi_1(W)$ is governed by the 1- and 2-handles. 
  
\begin{theorem-intro}\label{thm:main}
Let $Y_n = \mathbin{\#}_n \Sigma(2,3,5)$.  Then, in any handle decomposition of a definite four-manifold $W$ with boundary $Y_n \mathbin{\#} -Y_n$ the number of 1-handles is at least $n + b_1(W)$, the number of 2-handles is at least $n$, and the combined number of 2- and 3-handles is at least $n + b_1(W)$. % Finally, if $b_1(W) = 0$, the number of 2-handles itself is at least $n$. 
\end{theorem-intro}

Note that $Y_n \mathbin{\#} - Y_n$ bounds an integer homology ball and hence Theorem \ref{thm:main} is not vacuously true. For instance, we may take $W$ to be the integer homology ball $I \times (Y_n \setminus B^3)$. Since $Y_n$ has Heegaard genus $2n$, this homology ball admits a handle decomposition where the numbers of 1- and 2-handles are both equal to $2n$, and there are no 3-handles. Theorem \ref{thm:main}, in particular, asserts that for any handle decomposition of any rational homology ball bounding $Y_n \mathbin{\#} - Y_n$ the numbers of 1- and 2-handles are  at least $n$. The method of the proof of Theorem \ref{thm:main} can be used to obtain lower bounds for the number of 1- and 2-handles of rational homology balls bounding some other families of homology spheres without giving any constraints on the number of 3-handles. (See the discussion at the end of the introduction.)  Note that it seems to still be open if every homology sphere bounding a homology ball bounds one without 3-handles (see \cite{DLVVW}).

We make the following conjecture which would generalize Theorem \ref{thm:main}. 
\begin{conjecture-intro}\label{con:main}
	 The minimum number of handles in any bounding four-manifold for $Y_n \mathbin{\#} - Y_n$ goes to infinity as $n$ goes to infinity.
\end{conjecture-intro}
%To see that the theorem is not vacuously true, note that $Y_n \mathbin{\#} - Y_n$ is the boundary of $I \times (Y_n \setminus B^3)$, and so bounds an integer homology ball.  Since $Y_n$ has Heegaard genus $2n$, this homology ball admits a handle decomposition with one 0-handle, $2n$ 1-handles, $2n$ 2-handles, and no 3- or 4-handles.  Note that it seems to still be open if every homology sphere bounding a homology ball bounds one without 3-handles (see \cite{DLVVW}).  More generally, we conjecture that {\em any} bounding four-manifold for $Y_n \mathbin{\#} - Y_n$ requires many handles.  

\begin{remark}
By the work of Gordon-Luecke, $Y_n \mathbin{\#} -Y_n$ cannot bound a four-manifold built with no 1-handles and a single 2-handle \cite{GL}.  Taubes's periodic ends theorem implies $Y_n \mathbin{\#} -Y_n$ cannot bound a four-manifold with one 1-handle and one 2-handle \cite{Taubes}.
\end{remark}

The proof of Theorem~\ref{thm:main} relies on the following key technical result.%, which will be proved in the next section.  
\begin{theorem-intro}\label{thm:extension}
Let $Y_n$ be as above.  Suppose $W\colon Y_n \to Y_n$ is a definite cobordism.  Then every representation in some $3n$-dimensional component of the $SU(2)$-representation variety of $\pi_1(Y_n)$ extends over $W$.  
\end{theorem-intro}
We can quickly deduce Theorem~\ref{thm:main} from Theorem~\ref{thm:extension}.  For notation, given a path-connected space $X$, let $R(X) = \Hom(\pi_1(X), SU(2))$.  If $\pi_1(X)$ is finitely-generated, e.g.\ if $X$ is a compact manifold, then $R(X)$ is a real algebraic variety.  For instance, if $\pi_1(X)$ is the free group $F_k$ on $k$ generators, then $R(F_k) = SU(2)^{\times k}$, which is $3k$-dimensional. Furthermore, $R$ is contravariant: a continuous map $\iota\colon X \to Y$ induces a morphism of real algebraic varieties $\iota^*\colon R(Y) \to R(X)$; if $\iota$ is $\pi_1$-surjective, then $\iota^*$ is an embedding.  Consequently, if $\pi_1(X)$ can be generated with $k$ elements, then the dimension of $R(X)$ as a real algebraic variety is at most $3k$.  For the reader who prefers manifolds, generic points in a real algebraic variety are manifold points, and the dimension is just the dimension of the tangent space at such a point.  One example for the reader to keep in mind is the union of the coordinate axes in $\mathbb{R}^2$.  For the relevant background on real algebraic geometry, the reader is referred to \cite{BCR}.    
%We can quickly deduce Theorem~\ref{thm:main} from Theorem~\ref{thm:extension}.  For notation, given a path-connected space $X$, let $R(X) = \Hom(\pi_1(X), SU(2))$.  If $\pi_1(X)$ is finitely-generated, e.g.\ if $X$ is a compact manifold, then $R(X)$ is a real algebraic variety.  For the reader unfamiliar with representation varieties, note that if $F_k$ is a free group on $k$ generators, then $R(F_k) = SU(2)^{\times k}$, which is $3k$-dimensional. Furthermore, this is contravariant: a continuous map $\iota: X \to Y$ induces a morphism of real algebraic varieties $\iota^*: R(Y) \to R(X)$; if $\iota$ is $\pi_1$-surjective, then $\iota^*$ is an embedding.  Consequently, if $\pi_1(X)$ can be generated with $k$ elements, then the dimension of $R(X)$ as a real algebraic variety is at most $3k$.  (For the reader who prefers manifolds, generic points in a real algebraic variety are manifold points, and the dimension is just the dimension of the tangent space at such a point.  The canonical example is the union of the coordinate axes in $\mathbb{R}^2$.  For these facts and more background on real algebraic geometry, see for example \cite{BCR}.) 

\begin{proof}[Proof of Theorem~\ref{thm:main}]
Let $Q$ be a definite four-manifold bounding $Y_n \mathbin{\#} -Y_n$ and choose a handle decomposition for $Q$.
%Let $Q$ be a definite four-manifold bounding $Y_n \mathbin{\#} -Y_n$ and choose a handle decomposition for $Q$.  Without loss of generality, there is a single 0-handle.  
We first claim that if $b_1(Q)=0$, then the number of 2-handles is at least $n$.
For any such $Q$, attach a 3-handle to $Q$ to obtain a definite cobordism $W$ from $Y_n$ to $Y_n$. Let $\iota$ denote the inclusion of the incoming end.  Theorem~\ref{thm:extension} asserts that a $3n$-dimensional component of $R(Y_n)$ is contained in the image of $\iota^* \colon R(W) \to R(Y_n)$.  Since $\iota^*$ is a morphism of real algebraic varieties, we see that $R(W)$ has dimension at least $3n$.  Since attaching 3-handles does not affect $\pi_1$, we see that $R(Q)$ has dimension at least $3n$.  If $Q$ has a handle decomposition with $k$ 1-handles, then $R(Q)$ embeds in $R(F_k) = SU(2)^{\times_k}$, which is $3k$-dimensional.  Hence, $k\geq n$. Because $b_1(Q) = 0$, the number of $2$-handles has to be at least $n$.  This completes the proof of the claim.

Now, suppose that $b_1(Q) > 0$ and the number of 1-handles in a handle decomposition is $b_1(Q) + j$. By surgery along $b_1(Q)$ curves giving a basis for $H_1(Q;\R)$, we can obtain a new four-manifold $X$ with $b_1(X) = 0$. In fact, this surgery can be done in a compatible with the handle decomposition in a way that we trade $b_1(Q)$ 1-handles with $b_1(Q)$ 2-handles. One way to see this is to view the handle decomposition as a Kirby diagram, where the 1-handles are dotted circles.  Pick an appropriate set of $b_1(Q)$ of the 1-handles in $Q$  and replace those dotted circles by 0-framed 2-handles with attaching sphere given by those circles.  The resulting manifold $X$ now has a handle decomposition with $j$ 1-handles.  Note that $X$ is definite since we assumed $Q$ was.  Therefore, $j \geq n$. 

Since $b_1(Q)$ is determined by the 1- and 2-handles of $Q$ and there are at least $b_1(Q)+n$ 1-handles, it follows that the number of 2-handles must be at least $n$. That there are at least $b_1(Q) + n$ handles of index 2 or 3 follows from the fact that attaching 2- and 3-handles induces a cobordism from $\mathbin{\#}_{b_1(Q) + n} S^2 \times S^1$ to $Y_n \mathbin{\#} -Y_n$, which is a homology sphere.  Therefore, at least $b_1(Q) + n$ handles are necessary.
\end{proof}

The results can easily be extended to a hyperbolic example.  
\begin{cor-intro}
There exists a hyperbolic homology sphere $Z_n$ which bounds a homology ball and in any handle decomposition of any definite four-manifold $X$ with boundary $Z_n$ the number of 1-handles is at least $n + b_1(X)$ and the number of 2-handles is at least $n$.  Moreover, the combined number of 2- and 3-handles is at least $n + b_1(X)$. 
%There exists a hyperbolic homology sphere $Z_n$ which bounds a homology ball and any definite four-manifold $X$ it bounds must contain at least $n+b_1(X)$ 1-handles.  Moreover, the combined number of 2- and 3-handles is at least $n + b_1(X)$, and if $b_1(X) = 0$, then the number of 2-handles is at least $n$.
\end{cor-intro}

\begin{proof}
First, consider a homology cobordism $W_n$ with incoming end $Y_n \mathbin{\#} -Y_n$ obtained by attaching a single 1-handle to $Y_n \mathbin{\#} - Y_n$ and then a 2-handle along a hyperbolic knot in $Y_n \mathbin{\#} - Y_n \mathbin{\#} S^2 \times S^1$ which generates homology.  Such a hyperbolic knot exists by work of Myers \cite[Theorem 1.1]{Myers}.  By Thurston's hyperbolic Dehn surgery theorem, for all but finitely many framings, the outgoing end will be a hyperbolic homology sphere.  Choose such a generic framing and denote the resulting homology sphere by $Z_n$.  Since $Y_n \mathbin{\#} -Y_n$ bounds a  definite four-manifold, so does $Z_n$.  If $Z_n$ bounded a definite four-manifold $X$ with $j$ 1-handles, we could add $-W_n$ to obtain a definite four manifold with boundary $Y_n \mathbin{\#} -Y_n$.  Since $-W_n: Z_n \to Y_n \mathbin{\#} - Y_n$ has a handle decomposition with one 2-handle and one 3-handle, $X \cup -W_n$ gives a definite manifold with boundary $Y_n \mathbin{\#} -Y_n$ and a handle decomposition with $j$ 1-handles.  Theorem~\ref{thm:main} implies that $j \geq n+b_1(X)$.  
The statement about the number of 2-handles follows as in the proof of Theorem~\ref{thm:main}. The statement about the combined number of 2- and 3-handles also follows as in the proof of Theorem~\ref{thm:main}, where we consider the cobordism from $\mathbin{\#}_{j} S^2 \times S^1$ to $Z_n$, which is a homology sphere. %Hence the number of combined 2- and 3-handles is at least $j$, where $j \geq n+b_1(X)$. 
\end{proof}

We now give a rough sketch of the proof of Theorem~\ref{thm:extension}.  Instanton Floer homology can be used to associate a homology cobordism invariant $\Gamma_Y \colon \Z \to \R_{\geq 0} \cup \{\infty\}$ to any homology sphere $Y$ \cite{Da:CS-HCG}.  If this function is suitably non-trivial, then it is shown that for any negative-definite cobordism $W$ from $Y$ to itself, there is a non-trivial $SU(2)$ representation on $Y$ that extends over $W$.  In some cases, we can gain more control over which representations extend.  In particular, we establish the following technical result, which may be of interest to experts.   For notation, let $\chi(X) = R(X)/conj$ for any topological space $X$.
\begin{theorem-intro}\label{thm:big-extension}
	Suppose the Chern--Simons functional on an integer homology sphere $Y$ is Morse--Bott and $0<\Gamma_Y(i)<\infty$. Let $W\colon Y \to Y$ be a definite cobordism.  Then there are connected components $\chi_0$, $\chi_1$ of $\chi(Y)$ such that 
	\begin{itemize}
		\item[(i)] $\CS(\chi_0)=\CS(\chi_1)=\Gamma_Y(i)$;
		\item[(ii)] for any $\rho_0\in \chi_0$ and $\rho_1\in \chi_1$, there is $\sigma\in \chi(W)$ that restricts respectively to $\rho_0$ and $\rho_1$ 
		on the incoming and the outgoing ends of $W$;
		\item[(iii)] ${\gr}(\chi_0) + \dim(\chi_0) \geq 4i - 3$ and ${\gr}(\chi_1) + \dim(\chi_1) \geq 4i - 3$.
	\end{itemize}
\end{theorem-intro}

Here, $\CS$ denotes the Chern--Simons invariant of any element of $\chi_0$. The Chern--Simons invariant is defined for any element of $\chi(Y)$ (or more generally any $SU(2)$ connection on $Y$) and is constant on each connected component of $\chi(Y)$. Furthermore, ${\gr}(\chi_0)$ denotes the integer valued Floer grading of any element of $\chi_0$. (See Section \ref{sec:background} for a review of the definition of these two quantities.) A priori, $\CS(\chi_0)$ (resp. ${\gr}(\chi_0)$) is well-defined up to an integer  (resp. an integer multiple of $8$). However, ${\gr}(\chi_0)-8\CS(\chi_0)$ is a well-defined real number. The claim in Theorem \ref{thm:big-extension} about $\chi_0$ should be interpreted in the following way: the Chern--Simons invariant of $\CS(\chi_0)$ is equal to $\Gamma_Y(i)$ mod integers and the well-defined real number ${\gr}(\chi_0)-8\CS(\chi_0)+ \dim(\chi_0)$ is at least $4i-8\Gamma_Y(i) - 3$.

The Morse-Bott condition in Theorem \ref{thm:big-extension} is satisfied for any Seifert fibered homology sphere and is also closed under connected sums.  Therefore, we may apply the theorem to $Y_n$. In Proposition~\ref{prop:Gamma-Yn} below, we show that $\Gamma_{Y_n}(2n) = \frac{49n}{120}$, and so there is some component $\chi_0$ of $\chi(Y)$ with Chern--Simons invariant $\frac{49n}{120}$ for which every representation extends over $W$.  By comparing the Chern--Simons invariants of representations on $Y_n$ and grading information in the instanton Floer homology of $Y_n$, we are able to show that the corresponding component of $R(Y)$ has dimension $3n$.  

\begin{remark}\label{rmk:Yn-extension-choice}
For $Y_n$, we can identify a concrete choice of $\chi_0$ and $\chi_1$ precisely satisfying the conditions of the theorem (and they are the same).  See Example~\ref{ex:phs}, Example~\ref{ex:chi-Yn}, and Corollary~\ref{cor:beta-extends} below.  In fact, these collected results show that there is a choice of $\chi_0 = \chi_1$ on $Y_n$ which can be taken independent of the choice of $W$ and $\chi_0$ is preserved under any self-diffeomorphism of $Y_n$.  This $\chi_0$ is a $3n-3$-dimensional component of $\chi(Y_n)$.  
\end{remark}

Theorem \ref{thm:big-extension} also has a natural application to homology $S^3 \times S^1$'s and 2-knots.  This was suggested to us by Masaki Taniguchi.  
\begin{cor-intro}
Let $X$ be a closed four-manifold with the same integral homology as $S^3 \times S^1$ and suppose $Y_n$ is a non-separating hypersurface.  Then, there is a $3n$-dimensional component of $R(Y_n)$ which extends over $R(X)$, and hence $\pi_1(X)$ requires at least $n$ generators.    
\end{cor-intro}
\begin{proof}
Cut $X$ along $Y_n$ to obtain a homology cobordism $W\colon Y_n \to Y_n$.  By Theorem~\ref{thm:big-extension} and Remark~\ref{rmk:Yn-extension-choice} there is a $3n-3$-dimensional component of $\chi_0$ of $ \chi(Y_n)$ such that for each $\rho \in \chi_0$ and $\rho' \in \chi_0$, there is $\rho_W \in \chi(W)$ such that $\rho_W$ restricts to $\rho$ and $\rho'$ on the ends.  Now, for each $\rho \in \chi_0$, choose the $\rho'$ such that $\rho$ and $\rho'$ are identified by the gluing map used to construct $X$ from $W$.  (Here we are using that a self-diffeomorphism of $Y_n$ preserves $\chi_0$.)  The extension $\rho_W$ can then be extended over $X$, which completes the proof.  For the benefit of the reader, we spell out the gluing more explicitly in the language of representations, since a priori, it seems as though there is a conjugacy indeterminacy.  (This can also easily be understood by looking at flat connections where one glues up the bundles on the ends by a suitable automorphism.)  Letting $\iota^\pm$ denote the inclusions of $Y_n$ into $W$ as the incoming and outgoing ends of $W$, we have a description of $\pi_1(X)$ as an HNN extension as follows: 
\[
\pi_1(X) = \langle \pi_1(W) , t \mid \iota^-_*(y) = t^{-1}  \iota^+_*(y) t, \ y \in \pi_1(Y_n) \rangle.
\] 
Take $R_0$ to be the corresponding $3n$-dimensional component of $R(Y_n)$ coming from $\chi_0$.  For each $\rho \in R_0$, we can find a corresponding $\rho' \in R_0$ given by the gluing of the two ends.  (By convention, we take $\rho$ to be on the incoming end.)  Theorem~\ref{thm:big-extension} guarantees that there is $\rho_W$ on $W$ such that $\rho_W$ restricts to $\rho$ on the incoming end and a conjugate of $\rho'$ on the outgoing end.  Suppose that the conjugacy is realized by an element $A \in SU(2)$.  Then, we define a representation of $\pi_1(X)$ by $\rho_W$ on $\pi_1(W)$ and sending $t$ to $A$.  Hence, we can extend the $3n$-dimensional component of $R(Y_n)$ over $X$.
\end{proof}

Since a 2-knot in $S^4$ has the same knot group as surgery on that 2-knot, we have the following.  
\begin{cor-intro}
Let $K$ be a 2-knot in $S^4$ for which a once-punctured $Y_n$ is a Seifert solid.  Then, $\pi_1(S^4 - K)$ requires at least $n$ generators.  
\end{cor-intro}

\subsection*{More general three-manifolds}

The three-manifold $Y_n \mathbin{\#} - Y_n$ is the branched double cover of the knot $K_n\mathbin{\#}-K_n$ where $K_n = \mathbin{\#}_n T_{3,5}$ and $-K_n$ is the reverse of the mirror image of $K_n$. For any embedded disc $F$ in the the $4$-ball $B^4$ bounding $K_n\mathbin{\#}-K_n$, we may take the double cover of $B^4$ branched over $F$ to obtain a rational homology ball $\Sigma(F)$. Moreover, any relative handle decomposition of $F$ induces a handle decomposition of $\Sigma(F)$ with as many handles as the handles building $F$. Thus, lower bounds on the number of handles in a bounding four-manifold for $Y_n \mathbin{\#} - Y_n$ give lower bounds on the fusion number of $K_n\mathbin{\#}-K_n$. In particular, the fusion number of the determinant $1$ knot $K_n\mathbin{\#}-K_n$ grows linearly in $n$. For this particular family of knots, one can look at the ordinary homology groups of other cyclic branched covers of $K_n\mathbin{\#}-K_n$ to obtain similar lower bounds for the fusion number of $K_n\mathbin{\#}-K_n$ in a simpler way. It is natural to ask whether the techniques in this paper can be used to obtain lower bounds on fusion numbers that are not accessible to other techniques.

Motivated by this, we first state a slightly weaker version of Theorem \ref{thm:main} for more general families of three-manifolds. 

\begin{theorem-intro} \label{thm-Gamma}
	Suppose $Y$ is an integer homology sphere such that the Chern--Simons functional is Morse-Bott and for a positive integer $i_0$, we have $\Gamma_Y(i_0)<i_0/2$. Then the minimum number of $1$-handles and $2$-handles among definite four-manifolds 
	that fill $\mathbin{\#}_nY\mathbin{\#}_n-Y$ grows without any upper bound as $n\to \infty$.
\end{theorem-intro}

Examples of three-manifolds satisfying the assumption on $\Gamma_Y$ in the above theorem are given in \cite{Da:CS-HCG}. For instance, any Seifert fibered homology sphere $\Sigma(a_1,\dots,a_n)$ with a positive value of Fintushel and Stern’s $R$-invariant from \cite{FS:pseudofree} satisfies the assumption in Theorem \ref{thm-Gamma} \cite[Theorem 6]{Da:CS-HCG}. Furthermore, we can take $Y$ to be the branched double cover of the Whitehead double $D(T_{p,q})$ of the $(p,q)$-torus knot. Then \cite[Corollary 3]{Da:CS-HCG} asserts that  $\Gamma_Y(1)$ is less than $\frac{1}{4pq(2pq-1)}$, and hence smaller than $1/2$. The Chern--Simons functional of these three-manifolds is also Morse-Bott; see \cite[Section 4]{FS:HFSF} for the case of Seifert fibered homology spheres and Proposition \ref{Dpq-MB} for the branched double cover of $D(T_{p,q})$. Consequently, we have the following corollary of Theorem \ref{thm-Gamma}.

\begin{cor-intro}\label{cor:alex}
There are Alexander polynomial one knots $K$ such that the fusion number of the ribbon knot $J_n=\mathbin{\#}_nK\mathbin{\#_n}-K$ grows without any upper bound as $n \to \infty$.
\end{cor-intro}

For instance, we can take $K=D(T_{p,q})$. The assumption on the Alexander polynomial implies that all cyclic branched covers of $J_n$ are integer homology spheres, and one cannot obtain constraints on the fusion number of $J_n$ from the homology groups of cyclic branched covers. Note that there are other modern invariants giving lower bounds on fusion number such as \cite{JMZ:tor}. In particular, letting $J_n$ be the $(n,1)$-cable of the Kinoshita-Terasaka knot results in a family of Alexander polynomial one knots $\{J_n\}$, where the fusion number of $J_n$ is $n$ \cite[Theorem 1.1]{HKP}. However, the torsion invariant of \cite{JMZ:tor} provides the same lower bound for each knot of the form $\{\mathbin{\#}_nK\mathbin{\#_n}-K\}$ for a fixed knot $K$, and so would not work for the knots $\mathbin{\#}_n D(T_{p,q}) \mathbin{\#_n} -D(T_{p,q})$.

Furthermore, we remark that Theorem~\ref{thm-Gamma} in fact implies that the minimum number of generators and relations of the fundamental group of definite four-manifolds that fill $\mathbin{\#}_nY \mathbin{\#_n} -Y$ grows without any upper bound as $n \to \infty$, where $Y$ is the branched double cover of $D(T_{p,q})$. In particular, it provides the first example of a family of Alexander polynomial one knots that have arbitrarily large homotopy fusion number. Recall that the \emph{homotopy fusion number} of a ribbon knot is defined as the minimal number of relations for a presentation of the fundamental group of a ribbon disk complement. By definition, it gives a lower bound for the fusion number. In contrast to our examples in Corollary~\ref{cor:alex}, the examples obtained by $(n,1)$-cabling a fixed knot have a bounded homotopy fusion number, determined by the fusion number of the companion knot~\cite[Proposition 2.1]{HKP}.

\subsection*{Organization}
The paper is organized as follows.  In Section~\ref{sec:background} we review the basics of Chern--Simons gauge theory and its connection with the representation variety.  In Section~\ref{sec:SOcomplexes} we review the necessary algebra and gauge theory to define an instanton invariant for a restricted class of homology spheres. The definition of $\Gamma$ for this class of homology spheres is recalled in Section~\ref{sec:gamma-phs}, where it is computed for connected sums of the Poincar\'e homology sphere.  Finally, in Section~\ref{sec:extensions} we prove Theorem~\ref{thm:big-extension} and use it to deduce Theorem~\ref{thm:extension}.

\subsection*{Acknowledgements.} 
We thank Chuck Livingston for helpful discussions, and Masaki Taniguchi for helpful discussions and for pointing out consequences related to 2-knots and the homotopy fusion number. PA was supported by the European Union’s Horizon 2020 research and innovation programme under the Marie Sk\l odowska-Curie action, Grant No.\ 101030083 LDTSing. AD was supported by NSF Grants DMS-1812033, DMS-2208181 and NSF FRG Grant DMS-1952762NSF.  JH was partially supported by NSF grant DMS-2104144 and a Simons Fellowship.  TL was supported by NSF grants DMS-17909702, DMS-2105469, and a Sloan Fellowship, and additionally thanks the department at Washington University in St. Louis for its hospitality.  JP was supported by Samsung Science and Technology Foundation (SSTF-BA2102-02) and the POSCO TJ Park Science Fellowship.  This work was completed while AD, JH, and TL were in residence at the Simons Laufer Mathematical Sciences Institute during Fall 2022, supported by NSF Grant DMS-1928930.

\section{Chern-Simons gauge theory and representations}\label{sec:background}

In this section, we review the Chern-Simons functional and how it relates to representations of three-manifold groups.  Throughout, let $Y$ be an integer homology three-sphere and let $P$ denote the trivial $SU(2)$ bundle.    

Let $\mathcal{A}(Y)$ denote the space of connections on $P$, which can be identified with $\Omega^1(Y; \sutwo)$ using the trivialization of $P$.  %We may naturally identify $\mathcal{A}(Y)$ with $\Omega^1(Y; \sutwo)$.  
We define the {\em Chern-Simons} functional as
\[
\CS \colon \mathcal{A}(Y) \to \R, \hspace{1cm} \CS(A) = \frac{1}{8\pi^2} \int_Y \tr(\alpha \wedge d \alpha+\frac{2}{3} \alpha \wedge \alpha  \wedge \alpha).
\]
The critical points of the Chern-Simons functional are precisely those satisfying $F_\alpha = 0$, i.e.\ the {\em flat connections}.  In particular, the value of the Chern-Simons is constant on the connected components of the critical set.

The Chern--Simons functional behaves well with respect to the action of the {\it gauge group} $\G(Y) = {\rm Map}(Y,SU(2))$.  Any element $g \in \G(Y)$ acts on a connection by $g^*\alpha = g^{-1}dg + g\alpha g^{-1}$ and 
\[\CS(g^*\alpha) = \CS(\alpha) + \deg(g),\] 
where $\deg(g)$ is the degree of the map between three-manifolds.  Let $\Gzero(Y)$ denote the subgroup of $\G(Y)$ consisting of degree zero gauge transformations. 
%There is a certain symmetry to $\CS$.  Let $\G(Y) = Map(Y,SU(2))$ denote the gauge group.  Then, $g \in \G(Y)$ acts on a connection by $g^*\alpha = g^{-1}dg + g\alpha g^{-1}$ and $CS(g^*\alpha) = CS(\alpha) + deg(g)$, where $deg(g)$ is the degree of the map between three-manifolds.  Let $\Gzero(Y)$ denote the subgroup of $\G(Y)$ consisting of degree zero gauge transformations.  Connections are called {\em irreducible} if the stabilizer of $\alpha$ is precisely the set of constant gauge transformations $\{\pm 1
%\}$.  Let $\B(Y) = \A(Y)/\G(Y)$ and $\Bzero(Y) = \A(Y) / \Gzero(Y)$.  We also write $\A^*(Y)$ for the irreducible connections in $\A(Y)$, and similarly for $\B^*(Y), \Bzero^*(Y)$.  
Therefore, the Chern-Simons functional descends to
\[
\CS\colon \B(Y) \to \R/\Z, \hspace{2cm} \CS \colon \Bzero(Y) \to \R.
\]
where $\B(Y) = \A(Y)/\G(Y)$ and $\Bzero(Y) = \A(Y) / \Gzero(Y)$. With a slight abuse of terminology, we call an element of $\B(Y)$ a connection and use the term {\it lifted connection} to refer to an element of $\Bzero(Y)$. A connection is {\em irreducible} if the stabilizer of $\alpha$ is precisely the set of constant gauge transformations $\{\pm 1\}$. In the following, we also write $\A^*(Y)$ for the irreducible connections in $\A(Y)$, and similarly for $\B^*(Y), \Bzero^*(Y)$.

Associated to a flat connection $A$, there is an associated holonomy representation $\pi_1(Y) \to SU(2)$, defined up to conjugation.  In fact, there is a homeomorphism 
\begin{equation}\label{flat-rep}
	Crit(\CS)/\G(Y) \cong R(Y)/conj,  
\end{equation}
where $R(Y)/conj$ denotes modding out $R(Y)$ by conjugation.  We write $\chi(Y) = R(Y)/conj$, the {\em character variety} of $Y$. 
Therefore, we can discuss the Chern-Simons invariant of an element of $R(Y)$ or $\chi(Y)$ as an element of $\R/\Z$, and it is constant on connected components. Since $Y$ is an integer homology sphere, the equivalence class of the trivial connection $\theta$ (or the trivial representation using the correspondence \eqref{flat-rep}) gives the only element of \eqref{flat-rep} that is reducible. To match our gauge theory notation, let $R^*(Y)$ and $\chi^*(Y)$ denote the result of removing the trivial representation. In particular, $R^*(Y)$ has a copy of $SO(3)$ for each element of $\chi^*(Y)$.
%
%We can analyze $R(Y)$ and $\chi(Y)$ a little more topologically.  The trivial representation is unique in its conjugacy class, and it always gives an isolated point in $R(Y)$ and $\chi(Y)$.  Any other representation is irreducible, since $H_1(Y) = 0$, and so each irreducible in $\chi(Y)$ corresponds to an $SO(3)$ in $R(Y)$. To match our gauge theory notation, let $R^*(Y)$ and $\chi^*(Y)$ denote the result of removing the trivial representation.  

We may define a {\it relative grading} between any pair of connections on $Y$. For a connection $\alpha$ on $Y$, let 
\[
K_\alpha := \left[\begin{array}{cc} 0 & -d^*_\alpha \\ -d_\alpha  & * d_\alpha  \end{array}\right] \colon L^2_1(\Omega^0 \oplus \Omega^1) \to L^2(\Omega^0 \oplus \Omega^1),  
\]
where $\Omega^i = \Omega^i(Y, \sutwo)$.  The differential operator $K_\alpha$ is self-adjoint and Fredholm.  For any two connections $\alpha, \alpha'$ on $Y$ such that $K_\alpha$, $K_{\alpha'}$ are invertible, choose a smooth path $\{\alpha(t)\}_{-1\leq t \leq 1}$ with $\alpha(-1)=\alpha$, $\alpha(1)=\alpha'$, and consider the path of operators $K_{\alpha(t)}$.  We define $\gr(\alpha, \alpha')$ to be the spectral flow of this family. To extend this definition to arbitrary elements $\alpha$, $\alpha'$ of $\mathcal A(Y)$, we replace the operator $K_{\alpha(t)}$ with $K_{\alpha(t)}-\epsilon t\cdot {\rm Id}$ using a positive real number $\epsilon$ that is less than the magnitude of the non-zero eigenvalues of $K_{\alpha}$ and $K_{\alpha'}$. We call ${\gr}(\alpha, \theta)$ the {\em Floer grading}.  This spectral flow is invariant with respect to the action of $\G_0(Y)$. In fact, we have
\[
  {\gr}(g^*\alpha, h^*\alpha') = {\gr}(\alpha, \alpha')+8(\deg(g)-\deg(h))
\]
for any $g, h \in \G(Y)$. In summary, we obtain a $\Z/8$-valued Floer grading on the elements of $\B(Y)$ and a $\Z$-valued grading on the configuration space of lifted connections $\B_0(Y)$. Moreover, the quantity $\gr-8\CS$ gives a well-defined real-valued function on $\B(Y)$.
%Given two critical points of $\CS$, we can define a {\it relative grading} between them.  For a connection $\alpha$ on $Y$, let 
%\[
%K_\alpha = \begin{pmatrix} 0 & d^*_\alpha \\ d_\alpha  & - \star d_\alpha  \end{pmatrix} \colon L^2_1(\Omega^0 \oplus \Omega^1) \to L^2(\Omega^0 \oplus \Omega^1),  
%\]
%where $\Omega^i = \Omega^i(Y, \sutwo)$.  The operator $K_\alpha$ is self-adjoint and Fredholm.  For two connections $\alpha, \alpha'$ on $Y$, choose a smooth path $\alpha(t)$ between them and consider the path of operators $K_{\alpha(t)}$.  We define $gr(\alpha, \alpha')$ to be the spectral flow of this family.  Writing $\theta$ for the trivial connection on $Y$, we call $gr(\alpha, \theta)$ the {\em Floer grading}.  The spectral flow is not gauge invariant in general; we only know that for $g, h \in \G(Y)$, $sf(g^*\alpha, h^*\alpha') \equiv sf(\alpha, \alpha') \pmod{8}$.  However, $sf$\footnote{JP: is $sf$ the same things as $gr$?} is well-defined if one works instead with $\Bzero(Y)$. (This is why there is a $\Z/8$ grading on instanton Floer homology if one works with $\G(Y)$, but can be upgraded to a $\Z$ grading when working with $\Gzero(Y)$ \cite{FSZGraded}.)  

\begin{example}\label{ex:phs}
Let $Y_1 = \Sigma(2,3,5)$, oriented as the boundary of a negative-definite plumbing.  Then, $\chi(Y_1)$ consists of three isolated points: the trivial representation $\theta$ and two irreducible representations $\alpha, \beta$.  Hence, $R(Y)$ consists of a point and two copies of $SO(3)$.  For appropriate choices of lifts of $\alpha$ and $\beta$, we have 
\begin{equation}\label{bi-grading}
  (\CS(\alpha),{\gr}(\alpha))= \left(\frac{1}{120},1\right),\hspace{1cm}\left(\CS(\beta),{\gr}(\beta)\right)= \left(\frac{49}{120},5\right).
\end{equation}
See \cite[Theorem 3.7]{FS:HFSF} or \cite[Theorem 5.2]{KirkKlassen} for more details.
\end{example}

\begin{example}\label{connected-sum-inv}
Let $Y$ be a connected sum of two homology spheres $Y_1$, $Y_2$.  Then, $R(Y) = R(Y_1) \times R(Y_2)$.  The character variety $\chi(Y)$ consists of the trivial representation, a copy of $\chi^*(Y_1)$ (coming from $R^*(Y_1) \times \{\theta_2\}$ where $\theta_2$ is the trivial representation on $Y_2$), a copy of $\chi^*(Y_2)$, and $R^*(Y_1) \times R^*(Y_2) / conj$ where the conjugation action acts diagonally on $R^*(Y_1) \times R^*(Y_2)$.  Additionally, writing $\rho \in R(Y)$ as $(\rho_1, \rho_2)$, it is well known that 
\begin{equation}\label{additivity}
  \CS(\rho) = \CS(\rho_1) + \CS(\rho_2), \hspace{1cm} {\gr}(\rho)={\gr}(\rho_1)+{\gr}(\rho_2).
\end{equation}  
For instance, one can verify these identities using the fact that the disconnected three-manifold $Y_1\sqcup Y_2$, together with flat connections $\alpha_1$ and $\alpha_2$, is {\it flat cobordant} to $(Y,\alpha)$ via the 1-handle cobordism. One can then apply the methods of \cite{Auckly,Auckly:top-spec-flow} to relate the gauge theoretical invariants of $\alpha$, $\alpha_1$, and $\alpha_2$. (The claim about the values of Chern--Simons invariants is in fact more elementary, and one can see it directly from the definition.) To be more precise, the identities in \eqref{additivity} hold respectively mod $\Z$ and $8\Z$. However, one can similarly get the following additivity without any ambiguity:
\begin{equation}\label{additivity-2}
  {\gr}(\rho)-8\CS(\rho) = ({\gr}(\rho_1)-8\CS(\rho_1)) + ({\gr}(\rho_2)-8\CS(\rho_2)).
\end{equation}  
\begin{comment}
{\color{magenta} AD: We can get this from the definition of the Chern-Simons functional.} To see this, last claim, we rely on an extremely useful observation of Auckly: if two representations are bordant over a four-manifold, then they have the same Chern-Simons invariant~\cite{Auckly}; in our case, we can attach a 3-handle to obtain a bordism from $(Y^{(1)} \# Y^{(2)}, \rho^{(1)} * \rho^{(2)}$ to $(Y^{(1)} \coprod Y^{(2)}, \rho^{(1)} \coprod \rho^{(2)})$.  It is easy to generalize the definitions above to disconnected three-manifolds, and we see that the value of the Chern-Simons of $\rho^{(1)} \coprod \rho^{(2)}$ is $CS(\rho^{(2)}) + CS(\rho^{(2)})$.  
\end{comment}
\end{example}

\begin{example}\label{ex:chi-Yn}
	Let $Y_n = \mathbin{\#}_n \Sigma(2,3,5)$, where each $\Sigma(2,3,5)$ is oriented as the boundary of a negative-definite plumbing.
	%The character variety $\chi(Y_n)$ for $Y_n = \#_n \Sigma(2,3,5)$ can be described as follows.
	Given an $n$-tuple $\vec{\sigma}$ of the elements of $\{\theta, \alpha, \beta\}$, we write $\chi(\vec{\sigma})\subset \chi(Y_n)$ 
	for the connected component of the character variety consisting of elements which are conjugate to $\sigma_i$ on the $i$-th 
	$\Sigma(2,3,5)$-summand of $Y_n$. If $j$ and $k$ respectively denote the number of the copies of $\alpha$ and $\beta$ in $\vec{\sigma}$, and 
	$i=j+k$, then $\vec{\sigma}$ is diffeomorphic to the products of $(i-1)$ copies of $SO(3)$, 
	and the value of the Chern--Simons functional and ${\gr}$ on $\chi(\vec{\sigma})$ are respectively
	equal to $(j+49k)/120$ and $j+5k$ . In particular, there are $2^i \cdot {n \choose i}$ many connected components
	of $\chi(Y_n)$ of form $SO(3)^{\times_{(i-1)}}$. 
%Let $Y_2 =\Sigma(2,3,5) \# \Sigma(2,3,5)$, where each $\Sigma(2,3,5)$ is oriented as the boundary of a negative-definite plumbing.  Then, $\chi(Y_2)$ consists of 5 isolated points (one trivial representation and four representations which are irreducible on one summand and trivial on the other), and 4 copies of $SO(3)$.  The values of Chern-Simons are $0, \frac{1}{120}, \frac{1}{120}, \frac{49}{120}, \frac{49}{120}$ on the isolated points, and $\frac{2}{120}, \frac{50}{120}, \frac{50}{120}, \frac{98}{120}$ on the copies of $SO(3)$.  
%
%In general, if $Y_n = \#_n \Sigma(2,3,5)$, then $\chi(Y_n)$ can be described as follows.  For $1 \leq i \leq n$, there are $2^i \cdot {n \choose i}$ many copies of $SO(3)^{\times_{i-1}}$. For a fixed $i$, the values of Chern-Simons on these copies of $SO(3)^{\times_{i-1}}$ are as follows:  for each possible $0 \leq j, k \leq i$ with $j +  k = i$, there are ${i \choose j}$ components with Chern-Simons invariant $\frac{j+49k}{120}$, depending on how many factors are conjugate to $\theta, \alpha$, and $\beta$. Given a sequence $\vec{\sigma}$ of $n$-elements of $\{\theta, \alpha, \beta\}$, we write $\chi(\vec{\sigma})$ for the component of the character variety consisting of elements which are conjugate to $\sigma_i$ on the $i$th $\Sigma(2,3,5)$-summand of $Y_n$.     
\end{example}
   
%As mentioned previously, our goal is to compute Daemi's $\Gamma$-invariant for a connected sum of Poincar\'e homology spheres.  This is extracted from a certain equivariant version of instanton Floer homology.  We review this construction in two steps.  First, we review the necessary algebraic constructions, using $\SO$-complexes.  The second step is to fit the gauge theory to the algebra.  From this, we will be able to quickly define the $\Gamma$-invariant.  We remark that the constructions here do not follow the original development of \cite{Daemi}, but instead follows those of \cite{DaemiScaduto}.  
%!TEX root = Gamma-PHS.tex

\section{Instanton Floer homology and $\SOt$-complexes}\label{sec:SOcomplexes}

Given any integer homology sphere $Y$, a function $\Gamma_Y \colon \Z\to \R^{\geq 0}\cup \infty$ is defined in \cite{Da:CS-HCG}, which is a homology cobordism invariant.  That is to say, if there is an integer homology cobordism $W \colon Y\to Y'$, then $\Gamma_Y=\Gamma_{Y'}$. 
This invariant is constructed using the package of instanton Floer homology, and the proof of Theorem~\ref{thm:main} relies on computing $\Gamma_{Y_n}$ for the connected sums of Poincar\'e homology spheres $Y_n$. This computation is carried out in Section~\ref{sec:gamma-phs}.  The goal of this section is to review the algebraic preliminaries and the necessary background on instanton Floer homology to define the $\Gamma$-invariant.  
%Given any integer homology sphere $Y$, a function $\Gamma_Y \colon \Z\to \R^{\geq 0}\cup \infty$ is defined in \cite{Da:CS-HCG}, which is a homology cobordism invariant.  That is to say, if there is an integer homology cobordism $W \colon Y\to Y'$, then $\Gamma_Y=\Gamma_{Y'}$. 
%This invariant is constructed using a version of instanton Floer homology, and the proof of Theorem~\ref{thm:main} relies on computing $\Gamma_{Y_n}$ for the connected sums of Poincar\'e homology spheres $Y_n$. This computation is carried out in Section~\ref{sec:gamma-phs}.  The goal of this section is to review the algebraic preliminaries and the necessary background on instanton Floer homology to define the $\Gamma$-invariant.
The exposition here does not follow the original construction in \cite{Da:CS-HCG}, but instead follows \cite{DS:equiv-asp-sing} where analogous invariants for knots are constructed.  The main difference with \cite{Da:CS-HCG} is that here we work over $\Q[x^{\pm 1}]$ instead of a Novikov ring.  

%Before introducing this invariant, we need to explain some expository choices that follow.  First, the constructions here are not done following the original development of \cite{Da:CS-HCG}, but instead follow \cite{DS:equiv-asp-sing} where analogous invariants for knots are constructed.  The main difference with \cite{Da:CS-HCG} is that here we work over $\Q[x^{\pm 1}]$ instead of a Novikov ring.  

To define the $\Gamma$-invariant, one needs an algebraic construction analogous to the notion of {\it enriched $\mathcal{S}$-complexes} of \cite{DS:equiv-asp-sing}, which one might call an {\it enriched $\mathcal{SO}$-complex} because the Lie group $S^1$ in \cite{DS:equiv-asp-sing} is replaced with $SO(3)$ in the present setup.  (See also \cite[Section 6.6]{DLVVW} for a similar setup.)
This algebraic object is somewhat complicated, but when the Chern--Simons functional has non-degenerate critical points and the moduli spaces of downward gradient flowlines are cut out transversely, one can use a simpler object called an {\em $I$-graded $\SOt$-complex}, and the $\Gamma$-invariant is easier to define.  The Poincar\'e homology sphere is an example of such a manifold.  There is also a connected sum theorem (Theorem~\ref{thm:kunneth} below), which implies that the enriched $\mathcal{SO}$-complex of a connected sum of Poincar\'e homology spheres can be represented by an $I$-graded $\SOt$-complex.  For this reason, we do {\em not} define the instanton enriched $\SOt$-complex or $\Gamma$-invariants for a general homology sphere.  To prove Theorem~\ref{thm:big-extension}, which applies to a broader family of homology spheres, we rely on one established property of $\Gamma$ (Lemma~\ref{lem:extension}), but do not need to use anything about the definitions. We hope this makes the article more palatable to the reader.
%The second change is as follows.  To define the $\Gamma$-invariant, one needs an algebraic construction called an {\it enriched $\mathcal{SO}$-complex}.  This algebraic object is somewhat complicated, but when the Chern--Simons functional has non-degenerate critical points and the moduli spaces of downward gradient flowlines are cut out transversely, one can use a simpler object called an {\em $I$-graded $\SOt$-complex}, and the $\Gamma$-invariant is easier to define.  The Poincar\'e homology sphere is an example of such a manifold.  There is also a connected sum theorem (Theorem~\ref{thm:kunneth} below), which implies that the instanton Floer complex of a connected sum of Poincar\'e homology spheres can be represented by an $I$-graded $\SOt$-complex.  For this reason, we do {\em not} define the instanton enriched $\SOt$-complex or $\Gamma$-invariants for a general homology sphere.  To prove Theorem~\ref{thm:big-extension}, which applies to a broader family of homology spheres, we rely on one established property of $\Gamma$ (Lemma~\ref{lem:extension}), but do not need to use anything about the definitions.  It did not seem beneficial to give extensive algebraic definitions for results whose proofs we cite from other sources.  We hope this makes the article more palatable to the reader.

\subsection{$\SOt$-complexes}
In this subsection, we discuss an algebraic structure called an {\em $I$-graded $\SOt$-complex}. This consists of a module $\widetilde C$ over the ring $\Q[x^{\pm 1}]$ together with $\Q[x^{\pm 1}]$-module homomorphisms $\widetilde d \colon \widetilde C\to \widetilde C$ and $\chi \colon \widetilde C\to \widetilde C$. These homomorphisms are required to satisfy the following properties:
\begin{itemize}
	\item[(i)] $\widetilde d^2=0$, $\chi^2=0$ and $\widetilde d\chi+\chi\widetilde d=0$. That is to say, $\widetilde d$ and $\chi$ 
	are differentials that anti-commute with each other.
	\item[(ii)] There is a splitting of $\Q[x^{\pm 1}]$-modules 
	\begin{equation}\label{splitting}
		\widetilde C=C\oplus C\oplus \Q[x^{\pm 1}]
	\end{equation} 
	such that the homomorphism $\chi$ maps an element $(\alpha,\beta,r)\in C\oplus C \oplus \Q[x^{\pm 1}]$ to $(0,\alpha,0)$.  
	Here the summand $C$ is a finitely generated, free module over the ring $\Q[x^{\pm 1}]$. 
\end{itemize}
Finally $\widetilde C$ admits a $\Z$-grading and an $\R$-grading, respectively called the {\it homological grading} and {\it $I$-grading} and denoted by $\gr$ and $\deg_I$, such that $\widetilde d$ and $\chi$ are compatible with these gradings in the following sense. We first define a $(\Z\times \R)$-grading on $\Q[x^{\pm 1}]$ where $x^i$ has degree $(8i,i)$. For any $(k,r)\in \Z\times \R$, let $\Q[x^{\pm 1}]_{(k,r)}$ be the $(\Z\times \R)$-graded $\Q[x^{\pm 1}]$-module that is isomorphic to $\Q[x^{\pm 1}]$ as a module, but the $(\Z\times \R)$-grading is shifted up by $(k,r)$.  
We have the following requirements on the $(\Z\times \R)$-grading of $\widetilde C$.  
\begin{itemize}
	\item[(iii)] There is a finite list of pairs $(k_i,r_i)$ such that
		\begin{equation}\label{splitting-homog}
			C=\bigoplus_i \Q[x^{\pm 1}]_{(k_i,r_i)},
		\end{equation}
		and the graded version of \eqref{splitting} is given by 
		\begin{equation}\label{splitting-graded}
			\widetilde C=C\oplus C_{(3,0)}\oplus \Q[x^{\pm 1}]_{(0,0)}.
		\end{equation} 
		That is to say, the grading on the second summand is given by shifting the grading on the first summand by $(3,0)$. 
		In particular, the map $\chi$ increases the homological grading by $3$ and does not change the $I$-grading.
	\item[(iv)]	The map $\widetilde d$ has degree $-1$ with respect to the homological grading and decreases the $I$-grading.
\end{itemize}
%\begin{itemize}
%	\item[(a)] a $(\Z\times \R)$-graded finitely generated free module $\widetilde C$ over the ring $\Q[x^{\pm 1}]$ where the $\Z$-grading is called the homological grading and the $\R$-grading is called the $I$-grading,
%	\item[(b)] $\Q[x^{\pm 1}]$-module homomorphisms $\widetilde d \colon \widetilde C\to \widetilde C$ and $\chi \colon \widetilde C\to \widetilde C$,
%\end{itemize}
%and satisfies the following relations.
%\begin{itemize}
%	\item[(i)] $\widetilde d^2=0$, $\chi^2=0$ and $\widetilde d\chi+\chi\widetilde d=0$. That is to say, $\widetilde d$ and $\chi$ are differentials that anti-commute with each other.
%	\item[(ii)] The map $\widetilde d$ has degree $-1$ with respect to the Floer grading and decreases the $I$-grading.
%	\item[(iii)] There is a splitting of $\widetilde C$ as $C\oplus C[3]\oplus \Q[x^{\pm 1}]$ where 
%	the summand $C$ is a $(\Z\times \R)$-graded free module over the ring $\Q[x^{\pm 1}]$, $C[3]$ denotes 
%	the same module as $C$ with Floer grading being shifted up by $3$ (and the $I$-grading remaining the same), and the $(\Z\times \R)$-bigrading of $x^i$ in the summand $\Q[x^{\pm 1}]$ is $(8i,i)$.
%	The homomorphism $\chi$ maps an element $(\alpha,\beta,r)\in C\oplus C[3]\oplus \Q[x^{\pm 1}]$ to $(0,\alpha,0)$.  
%	In particular, the map $\chi$ has degree $3$ with respect to the Floer grading and is degree preserving with respect to the $I$-grading. 
%	\item[(iv)] Multiplication by $x$ is a degree $(8,1)$ isomorphism of the bigraded module $\widetilde C_*$.
%\end{itemize}

The second part of (iv) needs some clarification. At this point, we defined the $I$-grading only for homogenous elements of \eqref{splitting-graded}. For an arbitrary $\zeta\in \widetilde C$, we can uniquely write 
\[
  \zeta=\zeta_1+\zeta_2+\dots+\zeta_n,
\]
where $\zeta_i$ belongs to a homogenous summand of \eqref{splitting-graded} induced by the splitting in \eqref{splitting-homog}. We define the $I$-grading of $\zeta$, denoted by $\deg_I(\zeta)$, as the maximum of the real values $\deg_{I}(\zeta_j)$. In particular, the $I$-grading of $0\in \widetilde C_i$ is $-\infty$. Condition (iv) implies that $\deg_I(\zeta)$ is greater than $\deg_I(\widetilde d \zeta)$ for any $\zeta$.  Even though $\deg_I$ is called a grading in the literature, it is in fact only a filtration.

Conditions (i) and (ii) imply that the differential $\widetilde d$ with respect to the splitting \eqref{splitting} has the matrix form
\begin{equation}\label{widetilde-d}
	\widetilde d=\left[\begin{array}{ccc}d&0&0\\U&-d&D_2\\D_1&0&0\end{array}\right],
\end{equation}
where $d\colon C\to C$, $U\colon C\to C$, $D_1 \colon C\to \Q[x^{\pm 1}]$ and $D_2\colon \Q[x^{\pm 1}]\to C$ are module homomorphisms. These maps satisfy the relations
\begin{equation}\label{so-comp-id}
  d^2=0,\hspace{1cm} D_1d=0,\hspace{1cm} dD_2=0,\hspace{1cm} dU-Ud=D_2D_1.
\end{equation}

\begin{example}
	Suppose $(\widetilde C,\widetilde d,\chi)$ and $(\widetilde C',\widetilde d',\chi')$ are two $I$-graded $\mathcal{SO}$-complexes. Then $\widetilde C^\otimes:=\widetilde C\otimes_{\Q[x^{\pm 1}]} \widetilde C'$ inherits a $(\Z\times \R)$-grading from $\widetilde C$ and 
	$\widetilde C'$. This bigraded module together with the homomorphisms 
	\[
	  \widetilde d^\otimes=\widetilde d\otimes 1+\epsilon\otimes \widetilde d',\hspace{1cm}\chi^\otimes=\chi\otimes 1+\epsilon\otimes \chi'
	\]
	defines an $I$-graded $\mathcal{SO}$-complex. Here $\epsilon:\widetilde C\to \widetilde C$ is the homomorphism that acts as $(-1)^i$ 
	on the elements with homological grading $i$.
\end{example}

There are also morphisms, chain homotopies, and chain homotopy equivalences of $I$-graded $\SOt$-complexes, which are defined similarly.  See for example \cite[Section 6.6]{DLVVW} or \cite[Section 4]{DS:equiv-asp-sing} for more details on similar constructions.  

\subsection{The $\SOt$-complex for instanton Floer homology}\label{subsec:SO-comp-Y}
Instanton Floer theory can be used to produce $I$-graded $\SOt$-complexes. We fix an integer homology sphere $Y$, and for the sake of exposition, we make the simplifying assumption that the Chern--Simons functional satisfies a variant of the Morse-Smale condition: we ask that the critical points of the Chern--Simons functional are non-degenerate and that the moduli spaces of downward gradient flowlines are cut out transversely. For any two flat connections $\alpha$ and $\beta$ on the trivial bundle over $Y$, we write $\breve M(\alpha,\beta)_d$ for the $d$-dimensional component of the moduli space of {\it unprametrized downward gradient flowlines of the Chern--Simons functional} from $\alpha$ to $\beta$.  If $\alpha$ and $\beta$ are {\em lifted} flat connections, we define $\breve M(\alpha,\beta)_d$ to be the same moduli spaces after projecting $\alpha$ and $\beta$ to $\B(Y)$.  (To be more precise, this moduli space needs to be defined in terms of the solutions of the ASD equation on $\R\times Y$ that are asymptotic to $\alpha$ and $\beta$ on the ends.) In particular, the moduli space $\breve M(\alpha,\beta)_d$ is empty only if $d$ mod $8$ is equal to ${\gr}(\alpha)-{\gr}(\beta)-1-h^0(\alpha)$, where $h^0(\alpha)$ is the dimension of the stabilizer of $\alpha$ with respect to the action of the gauge group.
%The instanton Floer homology of a homology sphere $Y$ is defined by applying Morse homological techniques to the Chern--Simons functional. Let us see how this relates to $I$-graded $\SOt$-complexes.  We assume here that the Chern--Simons functional satisfies a variant of the Morse-Smale condition: we ask that the critical points of the Chern--Simons functional are non-degenerate and that the moduli spaces of downward gradient flowlines are cut out transversely.   

In order to define the instanton $\SOt$-complex $\widetilde C(Y)$ of an integer homology sphere $Y$, we need to define $C(Y)$ and the maps $d$, $U$, $D_1$ and $D_2$ satisfying the properties mentioned in the previous subsection. We define $C(Y)$ to be the $\Q$-vector space spanned by the critical points of $\CS \colon \Bzero^*(Y) \to \R$. Applying an element of the gauge group with degree $1$ defines an invertible action $x$ on $\Bzero^*(Y)$, and this gives $C$ the structure of a free $\Q[x^{\pm 1}]$-module, which is finitely generated by flat connections on $Y$. We define a $(\Z\times \R)$-grading on $C(Y)$ using the Floer grading and the Chern--Simons functional. 

The operators $d, U, D_1$ and $D_2$ come from counting the elements of the moduli spaces $\breve M(\alpha,\beta)_d$.  Variants of these maps can be found in \cite{Fl:I,Don:YM-Floer,Fro:h-inv}, but we mention them briefly because we use slightly different conventions in the definitions of these maps. We essentially follow the conventions in \cite{Da:CS-HCG} except that we use the language of lifted flat connections instead of Novikov rings. We start with the operator $d$, and for any lifted flat connection $\alpha$ with ${\gr}(\alpha)=i$, we define  
\begin{equation}\label{Floer-diff}
 d(\alpha) = \sum_{\beta} \mathbin{\#} \breve M(\alpha, \beta)_0 \cdot \beta,
\end{equation}
where the sum is over all lifted flat connections $\beta$ with ${\gr}(\alpha)=i-1$. By extending linearly over $\Q$, we obtain the map $d\colon C(Y) \to C(Y)$, which is, in fact, linear over $\Q[x^{\pm 1}]$. For any lifted flat connection $\alpha$ in degree $8i+1$, we define 
\[
  D_1(\alpha) =\mathbin{\#} \breve M(\alpha, \theta)_0 \cdot x^i
\]
and for lifted flat connections in other degrees, this map vanishes. The operator $D_2$ is defined by the $\Q[x^{\pm 1}]$-linear extension of the map
\[
  D_2(1) = \sum_{\beta} \mathbin{\#} \breve M(\theta, \beta)_0\cdot  \beta,
\]
where the sum is over all $\beta$ that ${\gr}(\beta)=-4$. The operator $U:C(Y)\to C(Y)$, which decreases the Floer grading by $4$, is defined analogous to \eqref{Floer-diff} except that $\#\breve M(\alpha, \beta)_0$ is replaced with $c\cdot \#N(\alpha, \beta)_0$ where $c$ is a fixed constant and $N(\alpha, \beta)_0$ is a co-dimension three subspace of $\#\breve M(\alpha, \beta)_3$. In  \cite{Don:YM-Floer,Fro:h-inv,Da:CS-HCG} different choices for the constant $c$ is used, and we use the convention of \cite{Da:CS-HCG} so that the last identity in \eqref{so-comp-id} holds.

\begin{example}\label{Y1-SO}
	Building on Example~\ref{ex:phs}, an $I$-graded $\mathcal{SO}$-complex for $Y_1 = \Sigma(2,3,5)$ is given as follows. 
	(See, for example, \cite[Example 3.21]{Da:CS-HCG} for more details.)  
	The $\Q[x^{\pm 1}]$-module $C(Y_1)$ has rank $2$ with generators $\alpha$ and $\beta$ whose $(\Z\times \R)$-gradings are
	given in \eqref{bi-grading}. The differential $\widetilde d$ on $\widetilde C(Y_1)$ has trivial components 
	$d$ and $D_2$. The map $D_1$ sends $\alpha$ to $1$, and the map $U$ sends $\beta$ to $4\alpha$, and $\alpha$ to $6x^{-1}\beta$.
\end{example}
\begin{remark}
	For an arbitrary homology sphere, in order to define instanton Floer homology, one needs to apply a perturbation to the Chern--Simons 
	functional to obtain the Morse-Smale condition.  One can construct an $I$-graded $\SOt$-complex for any appropriate perturbation.  
	Here the $I$-grading is defined using the value of the perturbed Chern--Simons functional. 
	The $I$-graded $\SOt$-complexes for two different perturbations need not be homotopy equivalent $I$-graded 
	$\SOt$-complexes.  This necessitates the definition of the notion of enriched $\SOt$-complexes, defined in a similar way to enriched 
	$\mathcal S$-complexes in \cite[Section 7]{DS:equiv-asp-sing}, to generalize the above construction 
	for general homology spheres.  This basically consists of packaging together an infinite sequence of perturbed instanton $\SOt$-complexes as 
	the perturbations converge to zero. One key point is that any $I$-graded $\SOt$-complex is canonically an enriched $\SOt$-complex.
\end{remark}
%\begin{remark}
%For an arbitrary homology sphere, in order to define instanton Floer homology, one needs to apply a perturbation to the Chern--Simons functional to obtain the Morse-Smale condition.  One can construct an $I$-graded $\SOt$-complex for such an admissible perturbation.  Here the $I$-gradings become the critical values of the perturbed Chern--Simons functional, but these values will change as we change the perturbations, and so one would need to take a limit.  A further algebraic issue is that because these $I$-gradings vary, the associated $I$-grading $\SOt$-complexes for two different perturbations need not be $I$-graded homotopy equivalent $\SOt$-complexes.  This necessitates enriched $\SOt$-complexes to generalize the construction for general homology spheres.  This basically consists of packaging together an infinite sequence of perturbed instanton $\SOt$-complexes as the perturbations converge to zero.  The key point is that any $I$-graded $\SOt$-complex is canonically an enriched $\SOt$-complex.  For a similar construction, see \cite[Section 7]{DS:equiv-asp-sing}.   
%\end{remark}

Recall that our goal is to understand the instanton invariants of $Y_n$.  Note that $Y_n$ does not satisfy the non-degeneracy condition for the Chern--Simons functional, since the critical sets have positive dimension.  In principle, we would need an enriched $\SOt$-complex to study the instanton invariants.  However, we can circumvent this issue with the following connected sum theorem.  

\begin{theorem}\label{thm:kunneth}
	Suppose $Y$ and $Y'$ are two integer homology spheres whose enriched instanton  $\mathcal{SO}$-complexes can be represented by $I$-graded $\mathcal{SO}$-complexes 
	$(\widetilde C(Y),\widetilde d,\chi)$ and $(\widetilde C'(Y),\widetilde d',\chi')$. Then the instanton enriched $\mathcal{SO}$-complex of $Y\mathbin{\#}Y'$ can be represented by the $I$-graded $\mathcal{SO}$-complex given by the tensor product of 
	$\widetilde C(Y)$ and $\widetilde C'(Y)$. 
\end{theorem}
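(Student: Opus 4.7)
The plan is to prove a Künneth-type formula for instanton Floer theory on a connected sum, following the general strategy used in gauge theoretic Floer homology (as pioneered by Fukaya, Li, and Fintushel--Stern, and carried out for enriched $\mathcal{S}$-complexes in \cite{DS:equiv-asp-sing}). Fix metrics and small holonomy perturbations $\pi$ on $Y$ and $\pi'$ on $Y'$ that realize the given $I$-graded $\SOt$-complexes; the goal is to produce a compatible perturbed data on $Y\#Y'$ whose perturbed Chern--Simons chain complex is isomorphic to $\widetilde C(Y)\otimes_{\Q[x^{\pm 1}]}\widetilde C'(Y')$ as an $I$-graded $\SOt$-complex, and then run this construction over a shrinking sequence of perturbations to identify the enriched $\SOt$-complex of $Y\#Y'$.

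First I would identify the generators. Choose a metric on $Y\#Y'$ with a long cylindrical neck $[-T,T]\times S^2$ separating the two summands, and put the perturbations $\pi$, $\pi'$ on the respective sides. Since any flat $SU(2)$ connection on $S^2$ is trivial, for $T$ large enough the perturbed flat connections on $Y\#Y'$ are in bijection with pairs $(\alpha,\alpha')$ of perturbed flat connections on $Y$ and $Y'$; the gauge equivalence for lifted connections is determined by the degrees on each side, which yields a canonical isomorphism on generators that also identifies the $(\Z\times\R)$-grading as additive, matching the tensor product grading (Example~\ref{connected-sum-inv} records the classical version of this statement for flat connections and Chern--Simons values).

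Next I would analyse the moduli spaces. For lifted perturbed flat connections $(\alpha,\alpha')$ and $(\beta,\beta')$ on $Y\#Y'$, a standard neck-stretching and gluing argument identifies the low-dimensional cut-down moduli spaces on $\R\times (Y\#Y')$ as a fibered product over the space of instantons on $\R\times S^2$ (all of which reduce to the trivial connection after suitable translation) of the corresponding moduli spaces on $\R\times Y$ and $\R\times Y'$. Because the endpoints on the neck are forced through the unique reducible on $S^2$, a flowline on the connected sum must either be a flowline on the $Y$-side with the $Y'$-side trivial, or vice versa, or pass through the trivial connection $\theta$ via the $D_1,D_2$ maps. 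A Koszul sign bookkeeping, using the sign operator $\epsilon$ on the $Y'$-factor, produces exactly the differential
\[
\widetilde d^{\otimes}=\widetilde d\otimes 1+\epsilon\otimes \widetilde d',
\]
and the corresponding identification for the $U$ operator. The involution $\chi$, which is purely algebraic (it records the splitting \eqref{splitting}), is matched by hand with $\chi\otimes 1+\epsilon\otimes\chi'$.

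Finally I would address the enriched structure. The above identification depends on the choice of perturbations $\pi,\pi'$, so I would carry out the construction for each element of the family of perturbations on $Y$ and $Y'$ that build the enriched $\SOt$-complexes, and verify that the compatibility homotopies produced by neck-stretching assemble to the tensor product enriched structure. The main obstacle is the gluing analysis at the neck in the presence of the reducible $\theta$: one must show that the cokernel of the linearised ASD operator over the neck behaves well enough for the fibered-product description of the moduli space to hold, and that the sign conventions agree with the Koszul rule in \eqref{widetilde-d}. This is technically delicate but essentially parallel to the argument for enriched $\mathcal{S}$-complexes carried out in \cite[Section 7]{DS:equiv-asp-sing}, where $S^1$ is replaced by $SO(3)$; I would invoke that framework with the modifications dictated by the $SU(2)$ gauge group and the different form of $\widetilde d$ in \eqref{widetilde-d}.
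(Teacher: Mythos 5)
Your high-level plan (neck-stretching, gluing along the $S^2$ neck, and importing the framework of \cite{DS:equiv-asp-sing}) is the same route the paper takes; its proof is essentially a citation of \cite{FukayaConnectSum}, \cite[Section 7.4]{Don:YM-Floer}, and the scheme of \cite[Section 6]{DS:equiv-asp-sing}. However, there is a genuine error at the step where you identify the generators, and it is not a technicality: it is the central point of the connected sum theorem. The claim that for a long neck the perturbed flat connections on $Y\#Y'$ are ``in bijection with pairs $(\alpha,\alpha')$'' confuses representations with gauge equivalence classes. Representations do form the product $R(Y)\times R(Y')$, but the Floer generators are points of $\chi$ (equivalently, gauge orbits in $\B$), and for a pair of irreducibles the stabilizer of the trivial connection on the $S^2$ neck produces an $SO(3)$ gluing parameter: the corresponding part of $\chi(Y\#Y')$ is an $SO(3)$-bundle over $\chi^*(Y)\times\chi^*(Y')$, exactly as recorded in Example~\ref{connected-sum-inv}. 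So even if the Chern--Simons functionals on $Y$ and $Y'$ are Morse, the critical set on the connected sum is Morse--Bott with $SO(3)$ components; after a further Morse perturbation each such component contributes two generators with homological gradings differing by $3$. This is precisely why the tensor product of $\SOt$-complexes, whose irreducible part is $C\otimes C'\oplus (C\otimes C')_{(3,0)}\oplus C\oplus C'$ as in \eqref{chain-cx}, appears rather than a naive pairing of generators; your identification produces a complex of the wrong rank, so the subsequent moduli-space bookkeeping cannot match \eqref{widetilde-d}.

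Relatedly, the flowline analysis built on that identification is too coarse. On the connected sum a gradient trajectory can move on one side while the other side sits at a fixed \emph{irreducible} flat connection (not at $\theta$), with the $SO(3)$ gluing parameter varying along the $S^2$; in the Morse--Bott/fibered-product description this is the source of the cross terms such as $\fU_1\otimes 1-1\otimes\fU_{n-1}$ in \eqref{diff}, which your sketch does not produce. To repair the argument you would need to work with the $SO(3)$ gluing parameter explicitly (this is the content of Fukaya's theorem and of \cite[Section 7.4]{Don:YM-Floer}) and then verify that the resulting Morse--Bott data assembles into the tensor product $I$-graded $\SOt$-complex, following the scheme of \cite[Section 6]{DS:equiv-asp-sing}; the passage to the enriched structure over a shrinking family of perturbations, which you describe at the end, is fine once this is in place.
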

\begin{proof}
	The essential ingredients for the proof of this connected sum theorem are given in \cite{FukayaConnectSum} and \cite[Section 7.4]{Don:YM-Floer}, 
	and the proof can be completed by following the scheme in \cite[Section 6]{DS:equiv-asp-sing}.
\end{proof} 

The above theorem now allows us to study $Y_n$ with an $I$-graded $\SOt$-complex, which we can compute when combined with Example~\ref{Y1-SO}.  In the next section, we will define the numerical invariant $\Gamma$ of $I$-graded $\SOt$-complexes, and carry out the computation of the $\Gamma$-invariant of a connected sum of Poincar\'e homology spheres.  Combined with Theorem~\ref{thm:big-extension}, proved in the final section, we will be able to quickly prove Theorem~\ref{thm:extension}.

\begin{remark}
	An alternate route to proving the main theorem of this paper can be given using the knot invariants of \cite{DS:equiv-asp-sing} applied to the unknot 
	in the connected sum $Y_n$ of Poinacr\'e homology spheres. To be a bit more specific, \cite{DS:equiv-asp-sing} introduces the notions
	of $I$-graded $\mathcal S$-complexes and enriched $\mathcal S$-complexes for knots in arbitrary integer homology spheres.
	A connected sum theorem for such objects similar to Theorem \ref{thm:kunneth} is established in \cite[Section 6]{DS:equiv-asp-sing}.
	Associated to the enriched $\mathcal S$-complex of a knot in an integer homology sphere, one can still define a numerical invariant $\Gamma$.
	Now the rest of the proof in this paper goes through without any essential change after replacing the invariant $\Gamma$ of $Y_n$ with 
	the $\Gamma$ invariant of the unknot in $Y_n$. 
	We also comment that for any homology sphere $Y$, the $\Gamma$ invariant of the unknot in $Y$
	and the $\Gamma$ invariant of $Y$ are expected to have equivalent information (see \cite[Subsection 5.3]{DS:clasp}).
%	The reader may be unsatisfied with the omission of a proof of Theorem~\ref{thm:kunneth}, which would indeed take many pages.  
%	A workaround for this would be to apply the connected sum theorem from \cite{DS:equiv-asp-sing}, which is for singular instanton knot homology, 
%	to the unknot $U$ in the Poincar\'e homology sphere.  Then, one could compute $\Gamma_{Y_n, U}$ in place of $\Gamma_{Y_n}$, 
%	and this can be substituted into the proof of Theorem~\ref{thm:extension} in the next Section~\ref{sec:extensions}, 
%	using a suitable analogue of Lemma~\ref{lem:extension}.
\end{remark}

%!TEX root = Gamma-PHS.tex

\section{The $\Gamma$-invariant for a connected sum of Poincar\'e homology spheres}\label{sec:gamma-phs}

The $\Gamma$-invariant is a numerical invariant $\Gamma_{\widetilde{C}}\colon \Z \to \overline{\R}_{\geq 0}$ defined for any $I$-graded $\SOt$-complex (and more generally enriched $\SOt$-complexes).  Here, $\overline{\R}_{\geq 0}$ denotes $\R_{\geq 0} \cup \{ \infty \}$.  Since we will be only concerned with $I$-graded $\SOt$-complexes in the present paper, we recall the definition only for $I$-graded $\SOt$-complexes.  

Consider an $I$-graded $\mathcal{SO}$-complex $(\widetilde C,\widetilde d,\chi)$ wth the associated maps $d$, $U$, $D_1$ and $D_2$.  We define $\Gamma_{\widetilde C}(i)$ for a positive integer $i$ as 
\begin{equation}\label{Gamma-positive}
	\Gamma_{\widetilde C}(i):=\min_\alpha\left\{\deg_I(\alpha) \;\; \Big|\;\;  \begin{array}{c}
	 \alpha\in C_{4i-3},\, \, D_1 U^{j}\alpha=0\,\, \text{ for }\,\, 0\leq j\leq i-2,\\[2mm] D_1 U^{i-1}\alpha \neq 0,\,d\alpha=0\end{array}\right\}
\end{equation}
and for a non-positive integer $i$ as
\begin{equation}\label{Gamma-negative}
	\Gamma_{\widetilde C}(i):=\min_\alpha\left\{\max(\deg_I(\alpha),0)\;\; \Big|\;\; 
	  \begin{array}{c}\alpha\in C_{4i-3},\, \, a_j\in R[U^{\pm 1}]\,\, (0\leq j\leq -i),\, a_{-i}=1,\, \\[2mm]
	 d\alpha=\sum_{j=0}^{-i}U^{j}D_2(a_j) \end{array}\right\},
\end{equation}
where $C_k$ denotes the subspace of $C$ given by elements $\alpha$ with ${\gr}(\alpha)=k$. If the set used in \eqref{Gamma-positive} or \eqref{Gamma-negative} is empty, then $\Gamma_{\widetilde C}(i)=\infty$. Note that our assumptions imply that $\Gamma_{\widetilde C}(i)$ is non-negative, and if $i$ is small enough, then $\Gamma_{\widetilde C}(i)$ vanishes. 

We can now define the $\Gamma$-invariant for a restricted class of integer homology spheres.  
\begin{definition}
If the instanton Floer complex for $Y$ can be represented by an $I$-graded $\SOt$-complex $\widetilde{C}(Y)$, then we define $\Gamma_Y$ to be $\Gamma_{\widetilde{C}(Y)}$.  
\end{definition}

\begin{example}
Let $Y_1 = \Sigma(2,3,5)$.  Then it follows from Example~\ref{Y1-SO} that   
\[
\Gamma_{Y_1}(i)= \left\{
\begin{array} {ll}
0 & i \leq 0, \vspace{0.2cm}\\
\frac{1}{120} & i = 1, \vspace{0.2cm}\\
\frac{49}{120} & i = 2, \vspace{0.2cm}\\
\infty & i > 2.
\end{array}\right.
\]
\end{example}

The rest of this section is devoted to computing $\Gamma$ for a connected sum of Poincar\'e homology spheres.  
\begin{prop}\label{prop:Gamma-Yn}
	The values of $\Gamma_{Y_n}$ are given by
	\begin{equation}\label{Gammani}
		\Gamma_{Y_n}(i)=\left\{
		\begin{array}{ll}
			0&i\leq 0,\vspace{0.2cm}\\
			\frac{i}{120}& 0<i\leq n,\vspace{0.2cm}\\
			\frac{n}{120}+(i-n)\frac{2}{5}& n< i \leq 2n,\vspace{0.2cm}\\
			\infty&i>2n.
		\end{array}
		\right.
	\end{equation}
\end{prop}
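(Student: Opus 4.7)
The plan is to combine Theorem~\ref{thm:kunneth}, which represents $\widetilde{C}(Y_n)$ as the $n$-fold tensor product of $\widetilde{C}(Y_1)$, with the explicit description in Example~\ref{Y1-SO}: $C(Y_1)$ is freely generated over $\Q[x^{\pm 1}]$ by $\alpha$ of bigrading $(1,1/120)$ and $\beta$ of bigrading $(5,49/120)$, with $d=0=D_2$, $D_1\alpha=1$, $D_1\beta=0$, $U\alpha=6x^{-1}\beta$, and $U\beta=4\alpha$. The bigrading on tensor monomials in $\widetilde{C}(Y_n)$ is additive across factors, and the tensor product formulas for $\widetilde d^\otimes$ and $\chi^\otimes$ (together with the identifications of the $C$- and $C_{(3,0)}$-summands in $\widetilde C(Y_n)$) determine the induced maps $d^\otimes$, $U^\otimes$, $D_1^\otimes$, $D_2^\otimes$.

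For the upper bound I would exhibit explicit cycles realizing the claimed values. For $0<i\leq n$, the natural candidate $\xi_i\in C(Y_n)_{4i-3}$ is represented by a tensor monomial with one slot equal to $\alpha$ in the unshifted summand $C$, $i-1$ slots equal to $\alpha$ in the shifted summand $C_{(3,0)}$, and $n-i$ slots filled by $1\in\Q[x^{\pm 1}]$; its homological grading is $1+4(i-1)=4i-3$ and its $I$-grading is $i/120$. Iteratively applying $U^\otimes$---which in each $\alpha$-slot cycles $\alpha\to 6x^{-1}\beta\to 24x^{-1}\alpha\to\cdots$---and then applying $D_1^\otimes$ shows that $i-1$ applications are required before $D_1^\otimes$ detects the $\alpha$ sitting in the unshifted slot, so $D_1^\otimes (U^\otimes)^{i-1}\xi_i\neq 0$ while earlier iterates vanish. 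For $n<i\leq 2n$, the analogous candidate has $2n-i$ slots of type $\alpha$ and $i-n$ slots of type $\beta$, with exactly one unshifted, producing homological grading $4i-3$ and $I$-grading $(2n-i)/120+(i-n)\cdot 49/120=n/120+(i-n)\cdot 2/5$.

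For the matching lower bound, any $\xi\in C(Y_n)_{4i-3}$ satisfying the conditions in \eqref{Gamma-positive} expands as a sum of tensor monomials with some numbers $a,b$ of $\alpha$- and $\beta$-type factors, $s$ shifted factors, and global $x$-exponent $e$, subject to $a+5b+3s+8e=4i-3$ and with $I$-grading $(a+49b)/120+e$. Requiring $D_1^\otimes (U^\otimes)^{i-1}\xi\neq 0$ imposes parity and range restrictions on $(a,b,s,e)$---reflecting that each $Y_1$-slot contributes to $D_1 U^j$ only for $j$ of a fixed parity determined by the type of generator in that slot---and optimizing the $I$-grading subject to these constraints recovers exactly the values in \eqref{Gammani}. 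The case $i>2n$ is infeasible because it would require $a+b>n$, so \eqref{Gamma-positive} defines an empty set and $\Gamma_{Y_n}(i)=\infty$; and for $i\leq 0$ one takes $\alpha=0$ in \eqref{Gamma-negative} and uses that $D_2^\otimes$ vanishes on the relevant component to verify the defining condition, giving $\Gamma_{Y_n}(i)=0$.

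The principal obstacle is the lower bound: ruling out exotic linear combinations of monomials of differing $(a,b,s,e)$-types, possibly coupled through $d^\otimes$, that could in principle produce a cycle of smaller $I$-grading while still surviving $D_1^\otimes(U^\otimes)^{i-1}$. This requires a careful bookkeeping of the tensor product structure together with the explicit form of $d^\otimes, U^\otimes, D_1^\otimes$ on mixed monomials, but once this is in place the two bounds match and \eqref{Gammani} follows.
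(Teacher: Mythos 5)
Your strategy for $i\le 0$, $0<i\le n$, and $n<i\le 2n$ is essentially the paper's: tensor up Example~\ref{Y1-SO} via Theorem~\ref{thm:kunneth} and minimize over tensor monomials, using exactly the same candidate cycles as \eqref{special-min-case-1} and \eqref{special-min-case-2}. The genuine gap is your treatment of $i>2n$. You claim the set in \eqref{Gamma-positive} is empty because degree reasons "would require $a+b>n$", but this ignores the $x$-action, which has bidegree $(8,1)$: the homological constraint on a monomial of type $(a,b)$ with $x$-exponent $e$ is $a+2b+2e=i$, so arbitrarily large $i$ is reachable with $a+b\le n$. Concretely, for $n=1$, $i=3$ the element $x\alpha$ lies in $C_{4i-3}=C_9$ and satisfies $D_1U^{2}(x\alpha)=24\neq 0$ (since $U^2\alpha=24x^{-1}\alpha$); it is excluded from \eqref{Gamma-positive} only by the intermediate vanishing requirement $D_1U^{j}=0$ for $0\le j\le i-2$ (here $D_1(x\alpha)=x\neq0$), not by any degree count. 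Ruling out all $\Q$-linear combinations that satisfy the intermediate vanishings yet survive $D_1U^{i-1}$ is not automatic (different monomial types output the same power of $x$, so cancellations are possible), and the paper does not attempt it: it obtains the case $i>2n$ by citing the general fact from \cite{Da:CS-HCG} that $\Gamma_Y(i)=\infty$ for $i>2h(Y)$ together with Fr\o yshov's computation $h(Y_n)=n$. You need either that citation or a real argument here; your one-line degree claim fails.

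By contrast, the "principal obstacle" you flag for the lower bound is not actually an obstacle, and the paper's (terse) resolution is worth internalizing: $\deg_I$ of a sum is by definition the maximum of the $\deg_I$ of its homogeneous pieces, so if $D_1U^{i-1}\xi\neq 0$ then some basis monomial $\zeta$ in the support of $\xi$ already has $D_1U^{i-1}\zeta\neq 0$, and $\deg_I(\xi)\ge\deg_I(\zeta)$; the cycle condition and couplings through $d$ are irrelevant for the lower bound. What is needed instead is the combinatorial criterion the paper extracts from the inductive formulas for $\fd_n$, $\fU_n$, $\fD_{1,n}$ (note $\fU_n$ is not slot-diagonal; it has the off-diagonal term $\fD_{1,1}\otimes 1$, so your "each slot cycles under $U_1$" picture is only heuristic): $D_1U^{i-1}\zeta\neq0$ forces $\zeta$ to have at most one unshifted factor in $\{\alpha,\beta\}$ and forces the existence of a tuple $(i_1,\dots,i_n)$ with $\sum i_j=i$, odd on $\alpha$-type slots, positive even on $\beta$-type slots, zero on $\Theta$-slots; equivalently $a\equiv i\pmod 2$ and $a+2b\le i$. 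Minimizing $\deg_I=\tfrac{i}{2}-\tfrac{59}{120}a-\tfrac{71}{120}b$ subject to these constraints and $a+b\le n$ then yields the stated values, and the same tuple-counting (uniqueness of the admissible tuple) is what verifies that your explicit candidates satisfy $D_1U^{j}=0$ for $j\le i-2$ and $D_1U^{i-1}\neq0$. With that bookkeeping supplied and the $i>2n$ case repaired as above, your argument coincides with the paper's.
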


\begin{proof}
We combine Theorem~\ref{thm:kunneth} with our computation of Example \ref{Y1-SO} to produce an $I$-graded $\mathcal{SO}$-complex $(\widetilde \fC(n),\widetilde \fd_n,\chi_n)$ for $Y_n$. There is a splitting of $\widetilde \fC(n)$ as $\fC(n)\oplus \fC(n)\oplus \Q[x^{\pm 1}]$ such that $\fC(n)$ and 
	the associated maps $\fd_n$, $\fU_n$, $\fD_{1,n}$ and $\fD_{2,n}$, defined as in \eqref{widetilde-d}, can be described inductively in the following way.
	
	For $n=1$, the module $\fC(1)$ is given in Example \ref{Y1-SO}, and for $n\geq 2$, we have
	\begin{equation} \label{chain-cx}
		\fC(n)=\fC(1)\otimes \fC(n-1)\oplus \underline \fC(1)\otimes \fC(n-1)\oplus \fC(1)\oplus \fC(n-1) ,
	\end{equation}
	where $\underline \fC(1)$ is $\fC(1)$ with $\gr$ being shifted up by $3$. 
	In particular, we may write a basis for $\fC(n)$ given by elements of the form
	\begin{equation} \label{gen}
	  \zeta=\sigma_1\otimes \sigma_2\otimes \dots \otimes \sigma_n
	\end{equation}
	where $\sigma_j\in \{\Theta,\alpha,\beta,\underline \alpha,\underline \beta\}$ such that for the largest $j$ that $\sigma_j\neq \Theta$, we have $\sigma_j\in \{\alpha,\beta\}$. Here $\Theta$ represents $1\in \Q$, and  $\underline \alpha, \underline \beta\in \underline \fC(1)$ 
	denote the elements corresponding to $\alpha$ and $\beta$. In particular, $\underline \alpha$ and $\underline \beta$ respectively have degrees $4$ and $8$.
	
	The maps $\fd_n$ and $\fU_n$ have the following inductive description with respect to the splitting in \eqref{chain-cx}.  (See \cite[Section 4.5]{DS:equiv-asp-sing}.)
	\begin{equation} \label{diff}
	\fd_{n}=\left[
	\begin{array}{cccc}
		-1\otimes \fd_{n-1}&0&0&0\\
		\fU_1\otimes 1-1\otimes \fU_{n-1}&1\otimes \fd_{n-1}&0&0\\
		-1\otimes \fD_{1,n-1}&0&0&0\\
		\fD_{1,1}\otimes 1&0&0&\fd_{n-1}
	\end{array}
	\right],
	\end{equation}
	\begin{equation}\label{U}
	\fU_{n}=\left[
	\begin{array}{cccc}
		\fU_1\otimes 1&0&0&0\\
		0&\fU_1\otimes 1&0&0\\
		0&0&\fU_1&0\\
		0&\fD_{1,1}\otimes 1&0&\fU_{n-1}
	\end{array}
	\right].
	\end{equation}
	Moreover, we have
	\begin{equation} \label{D1}
	\fD_{1,n}=\left[
	\begin{array}{cccc}
		0&0&\fD_{1,1}&\fD_{1,n-1}
	\end{array}
	\right],\hspace{1cm}
	\fD_{2,n}=0\
	\end{equation}
	We make the following observations about these maps and $\zeta$ as in \eqref{gen}:
	\begin{itemize}
	\item[(i)] $\fD_{1,n}(\zeta)$ is non-trivial if and only if $\sigma_j=\Theta$ for all $j$ except one in which case $\sigma_j$ is equal to $\alpha$.
	\item[(ii)] If there are at least two $\sigma_j$ in the expression \eqref{gen} which belong to the set $\{\alpha,\beta\}$, then $\fD_{1,n}\fU_n^l(\zeta)$, 
		for any non-negative integer $l$ is equal to $0$. 
	\end{itemize}
	Motivated by (ii), we say $\zeta$ in \eqref{gen} is {\it special} if there is at most one (and hence exactly one) $\sigma_j$ in $\{\alpha,\beta\}$.

	Let $\zeta$ in \eqref{gen} be special. The description of $\fd_n$ shows that $\zeta$ is a cycle. Suppose $a$ and $b$ respectively denote the number of $j$ that $\sigma_j\in \{\alpha,\underline \alpha\}$ and $\sigma_j\in \{\beta,\underline \beta\}$.
	Then the Floer grading of $\zeta$ is $4a+8b-3$.  In particular, for a positive integer $i$, $\fD_{1,n}\fU_n^{i-1}(\zeta)$ is non-zero only if $a$ and $i$ have the same parity. If we let
	\[
	  m=\frac{i-a}{2}-b,
	\]
	then the renormalized special cycle $x^{m}\zeta$ has Floer grading $4i-3$, and 
	\begin{equation}\label{deg-I-zeta}
	  \deg_I(x^{m}\zeta)=\frac{i}{2}-\frac{59}{120}a-\frac{71}{120}b.
	\end{equation}
	Using \eqref{U} and \eqref{D1}, it is easy to check that $\fD_{1,n}\fU_n^{i-1}(x^m\zeta)$, for a special $\zeta$ as in \eqref{gen} has the form
	\begin{equation}
		x^{m}\sum_{\mathcal F=(i_1,i_2,\dots,i_n)\in \Z_{\geq 0}^n}F_{i_1}(\sigma_1) \cdot F_{i_2}(\sigma_2) \dots F_{i_n}(\sigma_n),
	\end{equation}
	where the sum is over all tuples of non-negative integers $\mathcal F=(i_1,i_2,\dots,i_n)$ such that $i_1+\dots+i_n=i$, $i_j=0$ if $\sigma_j=\Theta$, $i_j$ is odd if $\sigma_j\in \{\alpha,\underline \alpha\}$ and $i_j$ is a positive even integer if 
	$\sigma_j\in \{\beta,\underline \beta\}$.
	For a positive integer $k$, $F_{k}(\sigma)=\fD_{1,1}\fU_1^{k-1}(\sigma)$ and $F_{0}(\sigma)=1$.

%	${\mathcal F}=(F_1,F_2,\dots,F_n)$ such that $F_i(\sigma_i)=1$ if $\sigma_i=\Theta$ and $F_i(\sigma_i)=\fD_{1,1}\fU^{k_i}(\sigma_i)$ if $\sigma_i\neq \Theta$. We define $l_i$ to be $0$ if we are in the former case and $k_i+1$ 
%	if we are in the 
%	latter case. As an additional condition on $(F_1,F_2,\dots,F_n)$ we need to have $l_1+\dots+l_n=l+1$. If $l_i$ is odd, then $F_i(\sigma_i)$ is a rational multiple of $x^{\frac{1-l_i}{2}}$, and if $l_i$ is an even positive integer, then $F_i(\sigma_i)$ is a rational multiple of $x^{\frac{2-l_i}{2}}$. This shows that
%	\begin{equation}\label{mdeg-fF}
%		F_1(\sigma_1) \cdot F_2(\sigma_2) \dots F_n(\sigma_n)=R_{\mathcal F}x^{-m_{\mathcal F}},
%	\end{equation}
%	with $R_{\mathcal F}\in \Q$ and 
%	\begin{equation}\label{deg-I}
%	  m_{\mathcal F}=\frac{l+1}{2}-\frac{\fo({\mathcal F})}{2}-\fe({\mathcal F}),
%	\end{equation}
%	where $\fo({\mathcal F})$ (resp. $\fe({\mathcal F})$) is the number of positive odd (resp. even) $l_i$. Moreover, if $R_{\mathcal F}$ is non-zero, then the Floer grading of $\zeta$ is $4\fo({\mathcal F})+8\fe({\mathcal F})-3$ and 

	To compute $\Gamma_{Y_n}(i)$, we wish to minimize \eqref{deg-I-zeta} among all special cycles $\zeta$ as above where $a$ and $i$ have the same parity, and $\fD_{1,n}\fU_n^{i-1}(x^m\zeta)$ is non-trivial. A necessary condition for the latter 
	property is the existence of $\mathcal F$ satisfying the properties mentioned in the previous paragraph. If $1\leq i\leq n$, then the minimum value of \eqref{deg-I-zeta} among all special cycles $\zeta$, for which there is ${\mathcal F}$ satisfying 
	required properties, can be achieved, for example, by 
	\begin{equation}\label{special-min-case-1}
		\zeta=\underbrace{ \Theta\otimes \dots \Theta}_{n-i}  \otimes \underbrace{\underline \alpha \otimes \dots \otimes \underline \alpha}_{i-1}\otimes \alpha,
	\end{equation}	
	and the only $\mathcal F$ for this special element is 
	\begin{equation*}
		{\mathcal F}=(\underbrace{ 0, \dots,0}_{n-i}, \underbrace{1,\dots, 1}_{i}),
	\end{equation*}	
	which shows that $\fD_{1,n}\fU_n^{i-1}(\zeta)$ is non-zero. On the other hand, there is no $\mathcal F$ for $\zeta$ in \eqref{special-min-case-1} if we replace $i$ with a smaller integer, which shows that $\fD_{1,n}\fU_n^{j}(\zeta)=0$ if $j<i-1$.
	Thus $\Gamma_{Y_n}(i)$ is equal to the value of \eqref{deg-I-zeta} for $\zeta$ in \eqref{special-min-case-1}, which is equal to $\frac{i}{120}$.

	In the case that $n+1\leq i\leq 2n$, the minimum value of \eqref{deg-I-zeta} among all special elements $\zeta$, with a tuple ${\mathcal F}$ satisfying the required properties, can be achieved, for example, by 
	\begin{equation}\label{special-min-case-2}
		\zeta=\underbrace{\underline \alpha \otimes \dots \otimes \underline \alpha}_{2n-i}\otimes \underbrace{ \underline \beta\otimes \dots \underline \beta}_{i-n-1}  \otimes \beta,
	\end{equation}	
	and the only admissible ${\mathcal F}$ is 
	\begin{equation*}
		{\mathcal F}=(\underbrace{1,\dots, 1}_{2n-i},\underbrace{2,\dots, 2}_{i-n}).
	\end{equation*}
	Analogous to the previous case, we see that for $\zeta$ in \eqref{special-min-case-2} the smallest $j$ that $\fD_{1,n}\fU_n^{j}(\zeta)\neq 0$ is $i-1$. Therefore, $\Gamma_{Y_n}(i)=\frac{n}{120}+(i-n)\frac{2}{5}$, which is obtained 
	by evaluating \eqref{deg-I-zeta} for $\zeta$ in \eqref{special-min-case-2}. For a non-positive integer $i$, vanishing of $\fD_{2,n}$ shows that $\Gamma_{Y_n}(i)=0$. It is shown in \cite{Da:CS-HCG} that $\Gamma_Y(i)=\infty$ for any $i>2h(Y)$, where $h$ denotes 
	the Fr\o yshov's invariant introduced in \cite{Fro:h-inv}. In particular, Fr\o yshov shows $h(Y_n)=n$, and we see $\Gamma_{Y_n}(i)=\infty$ for $i>2n$.
\end{proof}

%!TEX root = Gamma-PHS.tex
\section{Extension of representations to cobordisms}\label{sec:extensions}
\subsection{ASD moduli spaces}
In order to prove Theorems~\ref{thm:extension} and~\ref{thm:big-extension}, we need to review some gauge-theoretic constructions on four-manifolds.  Let $W$ be a cobordism from $Y$ to $Y'$ equipped with a Riemannian metric and attach cylindrical ends, i.e.\ isometric copies of $(-\infty, 0] \times Y$ and $[1,\infty) \times Y'$.  We will not distinguish between this 4-manifold with cylindrical ends and the original compact cobordsim, where it does not cause confusion.  For a connection $A$ on the trivial $SU(2)$ bundle on $W$, we say $A$ is an {\em anti-self-dual connection} if $F_A^+ = 0$.  If $W$ is isometric to $Y \times \R$, then this is the same as asking that $A$ is gauge equivalent to a downward gradient flowline of the Chern-Simons functional.  Define the topological energy of a connection $A$ to be
\[
\energy(A) = \frac{1}{8\pi^2} \int_{W} {\rm tr}(F_A \wedge F_A).
\]
Note that if $A$ is ASD, then $\energy(A)  = 8 \pi^2\| F_A \|_{L^2} \geq 0$ and $A$ is flat if and only if $\energy(A) = 0$.  If $A$ is an ASD connection with finite energy, then it is asymptotic to flat connections on $Y$ at $-\infty$ and $Y'$ at $+\infty$.  For $\alpha, \alpha'$ critical points of $\CS$ on $Y, Y'$, we write $M(W,\alpha, \alpha')_d$ to denote the component of the moduli space of gauge equivalence classes of ASD connections which are asymptotic to $\alpha$, $\alpha'$ on the ends and has {\it expected dimension $d$}.  
%(Again, this moduli space consists of ASD connections modulo a gauge group analogous to $\Gzero$; in particular, we require that our gauge transformations limit to elements of $\Gzero(Y)$ on the ends.)  
Further, 
\[
\CS(\alpha) - \CS(\alpha') \equiv  \energy(A)\mod \Z.
\]  
In particular, if we fix a lift $\widetilde \alpha$ of $\alpha$, then the connection $A$ as above allows us to fix a lift $\widetilde \alpha'$ of $\alpha'$ by requiring that $\CS(\widetilde \alpha)$ is equal to $\CS(\widetilde \alpha')+\energy(A)$.

Since the Chern-Simons functional on $Y_n$ has degenerate critical points, we will ultimately need to introduce some perturbations in three- and four-dimensions.  We explain this briefly.  To fix degeneracies of the Chern-Simons functional, one applies a small perturbation, called a {\em holonomy perturbation}.  Since we will not need the precise definition, we only point out that these come from functions $\pi \colon \A(Y) \to \R$ that are governed by the traces of holonomies around a collection of loops; holonomy perturbations are gauge invariant.  For a generic holonomy perturbation $\pi$, the critical points of $\CS + \pi$  become non-degenerate where non-degeneracy is now defined using non-degeneracy of the perturbation of $K_\alpha$ by the negative of the Hessian of $\pi$.  These critical points are no longer in correspondence with representations of the fundamental group.  Since this is a generic condition, we may always choose our holonomy perturbations to be arbitrarily small in a suitable sense.  Note that $\CS + \pi$ might have more or fewer critical points than $\CS$ (although there are a finite number on $\B$). The critical {\em values} of $\CS + \pi$ may not be the same as those of $\CS$; however, as $\pi$ approaches zero, the critical values of $\CS + \pi$ converge to those of $\CS$.  
%Since the Chern-Simons functional on $Y_n$ has degenerate critical points, we will ultimately need to introduce some perturbations in three- and four-dimensions.  We explain this briefly.  To fix degeneracies of the Chern-Simons functional, one applies a small perturbation, called a {\em holonomy perturbation}.  We will not need the precise definition, but we point out that these come from functions $\pi \colon \A(Y) \to \R$ that are governed by the traces of holonomies around a collection of loops; holonomy perturbations are gauge invariant.  For a generic holonomy perturbation, $\CS + \pi$ the critical points become non-degenerate (where non-degeneracy is now defined using an analogue of the operator $K_\alpha$), but they are no longer in correspondence with representations of the fundamental group.  Since this is a generic condition, we may always choose our holonomy perturbations to be arbitrarily small in a suitable sense.  Note that $\CS + \pi$ might have more or less critical points than $\CS$ (although there are a finite number on $\B$), and the critical {\em values} may not be the same as those of $\CS$; however, as $\pi$ approaches zero, the critical values of $\CS + \pi$ converge to those of $\CS$.  

Now, suppose we have $W$ as above.  The perturbations of Chern-Simons by $\pi, \pi'$ on the ends can be used to induce a perturbation $\Pi$ on $W$ which vanishes away from the cylindrical ends; however, the corresponding moduli space of perturbed ASD connections might not be cut out transversely. This can be fixed by altering the perturbation $\Pi$ of the ASD equation on $W$ in a compact set by a term which can be made arbitrarily small.  In particular, if $\pi$ and $\pi'$ are small, we may also choose $\Pi$ to be small too.  We call such a perturbation $\Pi$ where the moduli spaces of ASD connections are cut out transversely an {\em admissible perturbation}.  
  
%Similar to the Floer grading, an ASD connection on $W$ has an associated index.  If $A$ interpolates from $\alpha, \alpha'$ has index $k$, then ${\gr}(\alpha) - {\gr}(\alpha') = \ind(A)$.  The Floer gradings and indices have analogues in the perturbed settings as well.  

\subsection{Extension theorems from $\Gamma$}
One advantage of $\Gamma$ is that when it is non-trivial, we can use it to obtain information about flat connections over four-manifolds.  While we have only discussed the definition of $\Gamma$ for certain homology spheres, we will rely on the following lemma, derived from \cite[Proof of Theorem 3.45]{Da:CS-HCG}, which applies to arbitrary homology spheres.

\begin{lemma}\label{lem:extension}
	Suppose $W : Y \to Y$ is a negative definite four-manifold with $b_1 = 0$ and $0 < \Gamma_Y(i) < \infty$.  For any $\epsilon > 0$ there is $\delta>0$ such that the following holds. 
	Suppose $\pi$ is a perturbation of the Chern--Simons functional on $Y$ whose norm is smaller than $\delta$ and the critical points of $\CS+\pi$ are non-degenerate.
	Then there is a perturbation $\Pi$ of the ASD equation on $W$ with norm at most $\epsilon$ and compatible with the perturbation $\pi$ of the Chern--Simons functional of $Y$ on the ends 
	and a $\Pi$-perturbed
	 ASD connection $A$ on $W$
	such that $\| F(A) \|_{L^2} \leq \epsilon$, the connection $A$ is asymptotic to $\pi$-perturbed lifted flat connections $\alpha$, $\alpha'$ on $Y$, 
	${\gr}(\alpha) = {\gr}(\alpha') = 4i-3$ and $(\CS + \pi)(\alpha)$ and 
	$(\CS + \pi)(\alpha')$ are $\epsilon$-close to $\Gamma_Y(i)$.  
\end{lemma}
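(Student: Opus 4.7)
The plan is to realize $\Gamma_Y(i)$ by an explicit cycle in a perturbed instanton complex on $Y$, transport that cycle through $W$ using the cobordism-induced chain map, and extract the desired ASD connection from a non-empty zero-dimensional moduli space on $W$.

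First I would use that $\Gamma_Y(i)$ arises as a limit over vanishing holonomy perturbations on $Y$. For $\delta$ sufficiently small and any admissible $\pi$ with $\|\pi\|<\delta$, the perturbed $\SOt$-complex of $Y$ contains a cycle $\alpha_0$ of homological grading $4i-3$ with $D_1U^j\alpha_0=0$ for $0\le j\le i-2$ and $D_1U^{i-1}\alpha_0\ne 0$, whose $I$-grading, and hence $(\CS+\pi)$-value, lies within $\epsilon/3$ of $\Gamma_Y(i)$; this is exactly the content of \eqref{Gamma-positive} together with the fact that the critical values of $\CS+\pi$ converge to those of $\CS$ as $\pi\to 0$. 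Next I would equip $W$ with cylindrical ends and a compactly-supported admissible perturbation $\Pi$ of the ASD equation of norm at most $\epsilon$, extending $\pi$ on both ends, so that all relevant ASD moduli spaces are cut out transversely. Counting zero-dimensional components of these $\Pi$-perturbed moduli spaces defines a cobordism-induced chain map $\Lambda_W$ on the perturbed $\SOt$-complex of $Y$.

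The key intermediate step is that for negative definite $W$ with $b_1(W)=0$, the map $\Lambda_W$ is chain-homotopic to the identity on the $I$-filtered $\SOt$-complex. Granting this, $\Lambda_W(\alpha_0)=\alpha_0+\widetilde d\beta$ for some chain $\beta$, and since $D_1U^{i-1}\widetilde d=0$ in view of the relations \eqref{so-comp-id}, we still have $D_1U^{i-1}\Lambda_W(\alpha_0)\ne 0$. Consequently some zero-dimensional moduli space $M(W,\alpha_0,\alpha')_0$ must be non-empty, with $\alpha'$ a lifted $(\CS+\pi)$-critical connection of Floer grading $4i-3$. Any $A\in M(W,\alpha_0,\alpha')_0$ has topological energy $(\CS+\pi)(\alpha_0)-(\CS+\pi)(\alpha')$ up to an error controlled by $\|\Pi\|$; combined with the $I$-grading control from the previous paragraph, this both places $(\CS+\pi)(\alpha')$ within $\epsilon$ of $\Gamma_Y(i)$ and forces $\|F(A)\|_{L^2}$ to be a small multiple of $\sqrt{\epsilon}$. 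A routine rescaling of the initial $\epsilon$ absorbs the constants into the conclusion of the lemma.

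The principal technical obstacle is establishing the identity-up-to-chain-homotopy statement for $\Lambda_W$ in the negative definite, $b_1(W)=0$ case. This requires ruling out unwanted contributions from the unique reducible on $W$ (the trivial connection, which is isolated by the combination of $b_1(W)=0$ and definiteness), using dimension counts specific to the definite setting to compare the small-energy irreducible ASD moduli on $W$ with the corresponding cylinder moduli on $\R\times Y$, and assembling these comparisons into an explicit chain homotopy that respects the $I$-filtration. This is essentially the cobordism-invariance mechanism for $\Gamma_Y$ from \cite{Da:CS-HCG}, specialized to self-cobordisms, and it is the part of the argument whose details would need to be verified most carefully.
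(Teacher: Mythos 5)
Note first that the paper does not actually prove this lemma; it is imported wholesale from the proof of Theorem 3.45 of \cite{Da:CS-HCG}, so your proposal has to be measured against that mechanism. Your skeleton (realize $\Gamma_Y(i)$ by a perturbed cycle, push it through a cobordism-induced map, extract a small-energy perturbed ASD connection) is the right one, but the pivotal claim you insert --- that for a negative definite $W\colon Y\to Y$ with $b_1(W)=0$ the induced map $\Lambda_W$ is chain homotopic to the identity on the $I$-filtered $\SOt$-complex --- is unjustified and false in general. $W$ is not assumed to be a homology cobordism: it may have arbitrarily large $b_2$, torsion in $H_1$, and complicated $\pi_1$, and nothing forces its induced morphism to be homotopic to the identity. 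What the cited argument actually uses is much weaker: the cobordism induces a morphism (a ``local map'') of $I$-filtered $\SOt$-complexes which (a) increases the $I$-filtration by at most a small amount, because perturbed ASD connections on $W$ have almost non-negative topological energy, and (b) has reducible-to-reducible component a unit, because with $b_1(W)=0$ and negative definiteness the only zero-dimensional, zero-energy contribution from $\theta$ to $\theta$ is the trivial flat connection on $W$. One then checks that the image of a cycle $\alpha_0$ realizing the perturbed approximation of $\Gamma_Y(i)$ again satisfies the defining conditions of \eqref{Gamma-positive}, so by minimality its $I$-degree is at least $\Gamma_Y(i)-\epsilon$, while (a) bounds it above by $\Gamma_Y(i)+\epsilon$. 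It is this two-sided pinching that forces a generator $\alpha'$ with $(\CS+\pi)(\alpha')$ close to $\Gamma_Y(i)$, with the connecting trajectory having energy essentially $(\CS+\pi)(\alpha_1)-(\CS+\pi)(\alpha')$ with \emph{both} ends pinched near $\Gamma_Y(i)$, whence the curvature bound.

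Two further steps fail even if one grants your homotopy claim. First, $D_1U^{i-1}\widetilde d$ is not zero in general: from $dU-Ud=D_2D_1$ in \eqref{so-comp-id} one gets $D_1U^{i-1}d=-\sum_{j=0}^{i-2}\bigl(D_1U^{j}D_2\bigr)\,D_1U^{i-2-j}$, and the scalars $D_1U^{j}D_2$ need not vanish (and the off-diagonal components of $\widetilde d$ contribute further terms), so $D_1U^{i-1}\Lambda_W(\alpha_0)\neq 0$ does not follow from $\Lambda_W(\alpha_0)=\alpha_0+\widetilde d\beta$. Second, and more seriously, you never establish a lower bound on $(\CS+\pi)(\alpha')$: non-emptiness of some $M(W,\alpha_0,\alpha')_0$ together with ``energy equals the Chern--Simons drop'' is perfectly compatible with $\alpha'$ sitting at a much smaller critical value, in which case the energy, and hence $\|F(A)\|_{L^2}$, is large and the conclusion of the lemma fails. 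The missing lower bound is exactly the point where the minimality in the definition of $\Gamma$ must be applied to the image cycle, i.e.\ the local-map monotonicity mechanism sketched above; without it the argument does not close.
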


In the setting that the Chern-Simons functional is Morse--Bott, we are able to use the extension lemma above to find full components of the character variety for which every representation extends over any negative definite cobordism, proving Theorem~\ref{thm:big-extension}.  

\begin{theorem}\label{thm:big-extension-2}
	Suppose the Chern--Simons functional on an integer homology sphere $Y$ is Morse--Bott and $0<\Gamma_Y(i)<\infty$. Let $W\colon Y \to Y$ be a definite four-manifold. 
	Then there is a connected component $\chi_0$ of the character variety of $Y$ such that 
	\begin{itemize}
		\item[(i)] $\CS(\chi_0)=\Gamma_Y(i)$,
		\item[(ii)] any element of $\chi_0$ extends over $W$,
		\item[(iii)] ${\gr}(\chi_0)+\dim(\chi_0) \geq 4i-3$.
	\end{itemize}
\end{theorem}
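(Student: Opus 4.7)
The plan is to apply Lemma~\ref{lem:extension} with a sequence of vanishing perturbations, extract a limit via Uhlenbeck compactness, and exploit the Morse--Bott structure of $\CS$ to deduce the three conclusions. After possibly replacing $W$ by $-W$ we may assume $W$ is negative definite, and by a standard handle manipulation (as in the proof of Theorem~\ref{thm:main}, trading $1$-handles for $0$-framed $2$-handles in a Kirby diagram) we produce a definite cobordism $W'$ with the same boundary and $b_1(W')=0$. Since $\pi_1(W)\to\pi_1(W')$ is surjective, any extension of a representation over $W'$ pulls back to one over $W$, so it suffices to treat the case $b_1(W)=0$ in which Lemma~\ref{lem:extension} applies directly.

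Fix sequences $\epsilon_n\to 0$ and small non-degenerate holonomy perturbations $\pi_n\to 0$ of $\CS$. Lemma~\ref{lem:extension} supplies admissible ASD perturbations $\Pi_n$ on $W$ with $\|\Pi_n\|\le\epsilon_n$ and $\Pi_n$-perturbed ASD connections $A_n$ satisfying $\|F(A_n)\|_{L^2}\le\epsilon_n$, asymptotic to $\pi_n$-perturbed lifted flat connections $\alpha_n,\alpha_n'$ of Floer grading $4i-3$ and Chern--Simons value within $\epsilon_n$ of $\Gamma_Y(i)$. Since the total energy goes to zero, Uhlenbeck compactness (with no bubbling) yields, after gauge fixing and passing to a subsequence, a smooth limit $A_\infty$ on $W$ which is an honest flat connection. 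Its asymptotic boundary values $\alpha_\infty,\alpha_\infty'$ are genuine flat connections on $Y$ with $\CS(\alpha_\infty)=\CS(\alpha_\infty')=\Gamma_Y(i)$. Let $\chi_0$ denote the connected component of $\chi(Y)$ containing (the gauge equivalence class of) $\alpha_\infty$; item~(i) is immediate.

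Item~(iii) then follows from a standard Morse--Bott index computation. Under a small non-degenerate perturbation of $\CS$, the perturbed critical points in a neighborhood of a Morse--Bott critical submanifold $\chi_0$ are non-degenerate and correspond to critical points of an induced Morse function $f$ on $\chi_0$, with Floer gradings $\gr(\chi_0)+\mathrm{ind}_f(p)\in [\gr(\chi_0),\gr(\chi_0)+\dim(\chi_0)]$. Since the $\alpha_n$ have Floer grading $4i-3$ and limit into $\chi_0$, we obtain $\gr(\chi_0)+\dim(\chi_0)\ge 4i-3$.

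The principal obstacle is item~(ii): every element of $\chi_0$ must extend over $W$, not merely the single $\alpha_\infty$ produced above. The plan is to show that the set $E\subset \chi_0$ of extendable representations is both closed and dense, hence equal to $\chi_0$. Closedness follows from compactness of $R(W)$ (a closed subvariety of a finite product of copies of $SU(2)$) together with continuity of the restriction map. For density, given any $\alpha\in\chi_0$ I would reapply Lemma~\ref{lem:extension} with a holonomy perturbation $\pi$ whose restriction to $\chi_0$ (to leading order) is a Morse function possessing a critical point at $\alpha$ of Morse index $4i-3-\gr(\chi_0)\in[0,\dim(\chi_0)]$, admissible thanks to~(iii); letting $\pi\to 0$ should produce an extension of some element of $\chi_0$ arbitrarily close to $\alpha$. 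The hard part will be arranging that the perturbed ASD connection furnished by the lemma is actually asymptotic to the engineered critical point near $\alpha$ rather than to some competing perturbed critical point with the same Floer grading and Chern--Simons value; this will likely require tailoring the perturbation so the engineered candidate is uniquely selected, or employing a parametrized construction along $\chi_0$ that sweeps out the whole submanifold at once.
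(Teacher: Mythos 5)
Your reduction to the case of a negative definite cobordism with $b_1=0$, the application of Lemma~\ref{lem:extension} along a sequence of vanishing perturbations, the Uhlenbeck limit producing a flat connection on $W$ asymptotic to flat limits with Chern--Simons value $\Gamma_Y(i)$, and the Morse--Bott index bookkeeping giving (i), (iii) and the extendability of \emph{one} element of a component all match the paper's argument. The genuine gap is exactly the step you flag yourself: conclusion (ii). Your plan to prove density of the extendable set by engineering a perturbation whose Morse function on $\chi_0$ has a critical point at a prescribed $\alpha$, and then hoping the connection supplied by Lemma~\ref{lem:extension} is asymptotic to \emph{that} critical point, does not go through as stated: the lemma gives no control over which perturbed critical point (of the correct grading and Chern--Simons value) appears as the asymptotic limit, and moreover the limiting flat connection may land in a \emph{different} connected component $\chi_0'$ also satisfying (i) and (iii), so you cannot even guarantee density in the component you fixed in your first paragraph. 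As written, (ii) remains unproved.

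The paper resolves this by turning the lack of control into an advantage, arguing by contradiction rather than by targeting a point. Suppose that on \emph{every} component satisfying the constraints there were an infinite set $X_0$ of elements that do not extend over $W$. Then choose, on each such component, a Morse function $f_\chi$ all of whose critical points lie in $X_0$ (pull back any Morse function by a diffeomorphism moving its critical points into $X_0$), extend it to a holonomy perturbation with $f|_{\chi(Y)}=f_\chi$ as in \cite[Theorem 5.12]{SavelievHomology}, and rerun your limiting construction with the perturbations $\delta_k f$. Whatever the limit is, it is automatically a critical point of the chosen Morse function on some admissible component, hence lies in $X_0$, yet it extends over $W$ --- a contradiction. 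Therefore some admissible component has only finitely many non-extendable elements; since (as you correctly note) the extendable locus is closed, and a positive-dimensional connected component cannot have a proper closed subset whose complement is finite (the zero-dimensional case being immediate), every element of that component extends. Note this argument only needs the limit to be \emph{some} critical point of the Morse function you selected, which is exactly the information the construction provides, so the selection problem you worried about never arises; but it does require running the contradiction over all candidate components simultaneously, which is why the paper's statement asserts existence of a component rather than a property of the particular $\chi_0$ found first.
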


Here we may use the same convention as in the discussion following the statement of Theorem \ref{thm:big-extension} in the introduction to make sense $\CS(\chi_0)$, ${\gr}(\chi_0)$ as numbers in $\R$, $\Z$. Equivalently, this can be interpreted in the following way that there is a lift of $\chi_0$ to $\mathcal B_0(Y)$ such that $\CS$ and ${\gr}$ of the lift of $\chi_0$ satisfy the conditions in the statement of Theorem \ref{thm:big-extension-2}. In fact, any real valued lift of $\CS(\chi_0)$ determines uniquely a lift of any given connected component $\chi_0$.

\begin{proof}
	Let $f_{\chi}$ be a Morse function on $\chi = \chi(Y)$.  It is shown in \cite[Theorem 5.12]{SavelievHomology} that there is a holonomy perturbation $f\colon\mathcal B(Y) \to \R$ such 
	that $f|_{\chi(Y)} = f_\chi$, and for any $0<\epsilon<1$, all the critical points of $\CS+\epsilon f$ are non-degenerate. 
	After multiplying $f$ by a small factor if necessary, we have good control over the critical points of $\CS+\epsilon f$ for any $0<\epsilon<1$.  
	In particular, the critical points of $\CS + \epsilon f$ are contained in a small neighborhood of $\chi$ and can be identified with the critical points of $f_{\chi}$. Let $\fC(f)$ denote 
	the set of critical points of $f_{\chi}$. For any $\alpha\in \fC(f_\chi)$, there is a critical point $\alpha_\epsilon$ of  $\CS+\epsilon f$ such that $\alpha_\epsilon \to \alpha$ as $\epsilon\to 0$. Moreover, if $\alpha$ belongs to the connected component $\chi_0$ of $\chi$,
	then we have 
	\[
	  {\rm gr}(\alpha_\epsilon)={\gr}(\chi_0)+\mu(\alpha;f_{\chi}),\hspace{1cm}\lim_{\epsilon\to 0}\CS(\alpha_\epsilon)=\CS(\chi_0)
	\]
	where ${\rm gr}(\alpha_\epsilon)$ is the Floer grading of $\alpha_\epsilon$ defined using the perturbation of $K_\alpha$ by the negative of the gradient of $\epsilon f_\chi$ and $\mu(\alpha;f_{\chi})$ is the Morse index of $f_{\chi}$ at the critical point $\alpha$. 
	(See, for example, \cite[Lemma 7]{ConnectSumSU3}.)
	%By perturbing $CS+\frac{1}{k}f$ outside the chosen neighborhood of $\chi$, we may obtain an admissible perturbed Chern--Simons functional $CS+\pi_k$ which has the same critical points as $CS+\frac{1}{k}f$.  
	
	Suppose $W\colon Y\to Y$ is given as in the statement.
	By changing the orientation of $W$ and flipping it upside down if necessary, we may assume that $W$ is negative definite. 
	By surgering out a set of loops giving a basis for $b_1(W)$, we obtain a negative definite cobordism with trivial $b_1$ 
	whose fundamental group is a quotient of the fundamental group of $W$.
	Therefore, we only need to prove the claim for this new cobordism and hence we can assume that $b_1(W)=0$.
	Since $0<\Gamma_Y(i)<\infty$, 
	by applying Lemma~\ref{lem:extension} to an appropriate sequence $\delta_k f$ of perturbations of the Chern--Simons functional with $\delta_k \to 0$, 
	we obtain $(\delta_k f)$-perturbed flat connections $\alpha_k$ and 
	$\beta_k$ with ${\rm gr}(\alpha_k)={\rm gr}(\beta_k)=4i-3$ and a $\Pi_k$-perturbed ASD connection  $A_k$ over $W$ such that 
	$\Pi_k\to 0$ and $|\!|F_{A_k}|\!|\to 0$ as $k\to \infty$.  
	We also know that the values of the perturbed Chern-Simons functional on $\alpha_k$ and $\beta_k$ are close to $\Gamma_Y(i)$.  

	Suppose that $\alpha_k$ and $\beta_k$ correspond to elements of the components $\chi_0$ and $\chi'_0$ of $\chi$.  After possibly passing to a subsequence, $\alpha_k$ and $\beta_k$ converge to elements $\alpha$ and $\beta$ in the lifts $\widetilde \chi_0$ and $\widetilde \chi'_0$ of $\chi_0$, $\chi'_0$ where the choices of lifts are determined by the property that their Chern--Simons values are $\Gamma_Y(i)$.
	Moreover, $\alpha$ and $\beta$ are critical points of $f_\chi$.  Floer-Uhlenbeck compactness \cite{Uhlenbeck,Fl:I,Don:YM-Floer} implies that, after possibly passing to a subsequence, the gauge equivalence classes of the connections $A_k$ are chain convergent to
	$$\left([B_1],[B_2],\dots,[B_\ell],[A_\infty],[C_1],[C_2]\dots[C_m]\right)$$ where the $B_i$ and $C_j$ are ASD connections on $\R\times Y$ with positive energy, and $A_\infty$ is an ASD connection on $W$.  Since $\lim_{k \to \infty} \mathcal{E}(A_k) = 0$, we see that no energy can escape at the ends, and so $\ell = m = 0$.  Further, we see that $A_\infty$ is flat.  It follows that $A_\infty$ is asymptotic to $\alpha$ and $\beta$ on the incoming and the outgoing ends of $W$.  Furthermore, 
	\[
	  {\gr}(\widetilde \chi_0)+\mu(\alpha;f_{\chi}) = {\gr}(\widetilde \chi_0')+\mu(\beta;f_{\chi}) = 4i-3,
	\]
	and 
	\[
	  \CS(\widetilde \chi_0)=\CS(\widetilde \chi_0')=\Gamma_Y(i).
	\]	
Therefore, we have shown that $\alpha \in \widetilde \chi_0$ extends over $W$ and ${\gr}(\widetilde \chi_0) + \dim(\chi_0) \geq 4i - 3$.  We just need to see that any other element of $\chi_0$ extends over $W$ as well.  The elements of $\chi_0$ that extend over $W$ form a closed subset of $\chi_0$, so it suffices to see that all but finitely many elements in a connected component $\chi_0$ as above extend over $W$.  Suppose instead there was a collection $X_0$ of infinitely many elements in each $\chi_0$ that did not extend.  Then, we can choose a Morse function on each $\chi_0$ such that the critical points are all contained in $X_0$.  (For example, choose an arbitrary Morse function on $\chi_0$ and then pullback by a diffeomorphism which moves the critical points to points in $X_0$.)  By following the above construction, we obtain a flat connection in $\chi_0$ which extends over $W$ and which is a critical point of the Morse function with critical points in $X_0$ for some $\chi_0$.  
This is a contradiction.   
\end{proof}

With Theorem~\ref{thm:big-extension-2} established, we are able to prove Theorem~\ref{thm:extension}, that for any definite cobordism $W\colon Y_n \to Y_n$, there is a $3n$-dimensional component of $R(Y_n)$ such that every element extends over $W$. For notation, recall from Example~\ref{ex:phs} that there are two irreducible elements $\alpha$, $\beta$ of $\chi(\Sigma(2,3,5))$. %The Chern-Simons invariants are $\frac{1}{120}$ and $\frac{49}{120}$, respectively.   
\begin{cor}\label{cor:beta-extends}
	Suppose $W$ is a definite cobordism from $Y_n$ to $Y_n$. Then any element of the character variety $\chi(\beta,\beta,\dots,\beta)$ extends over $W$.
\end{cor}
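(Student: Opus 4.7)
The plan is to apply Theorem \ref{thm:big-extension-2} to $W$ with $i = 2n$, and then use the explicit description of $\chi(Y_n)$ in Example \ref{ex:chi-Yn} together with the value $\Gamma_{Y_n}(2n) = 49n/120$ from Proposition \ref{prop:Gamma-Yn} to pin down the resulting component as $\chi(\beta, \beta, \ldots, \beta)$. The hypotheses of Theorem \ref{thm:big-extension-2} are satisfied: $Y_n$ is a connected sum of Seifert fibered homology spheres, so the Chern--Simons functional on $Y_n$ is Morse--Bott, and $\Gamma_{Y_n}(2n) = 49n/120$ is finite and positive.

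First I would feed $W$ and $i = 2n$ into Theorem \ref{thm:big-extension-2} to produce some component $\chi_0 = \chi(\vec{\sigma})$ of $\chi(Y_n)$ satisfying its three conclusions. Writing $\vec{\sigma}$ with $j$ copies of $\alpha$ and $k$ copies of $\beta$ (with $j + k \geq 1$), the formulas in Example \ref{ex:chi-Yn} give $\CS(\chi_0) = (j+49k)/120$, ${\gr}(\chi_0) = j + 5k$, and $\dim(\chi_0) = 3(j+k) - 3$. Substituting into the grading inequality from condition (iii) and clearing denominators, the inequality simplifies to
\[
59j + 71k \geq 71n.
\]

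Next I would observe that, together with $j, k \geq 0$ and $j + k \leq n$, this forces $(j, k) = (0, n)$, since $59j + 71k = 71(j+k) - 12j \leq 71n$ with equality only if $j + k = n$ and $j = 0$. Therefore $\chi_0 = \chi(\beta, \beta, \ldots, \beta)$. The Chern--Simons congruence condition (i) is then automatically verified, and conclusion (ii) gives the desired statement that every element of $\chi(\beta, \ldots, \beta)$ extends over $W$.

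The substantive work is already packaged into Theorem \ref{thm:big-extension-2} and the computation of $\Gamma_{Y_n}$; what remains is the elementary arithmetic pinning down the tuple $(j,k)$, so there is no real obstacle. The fact that the grading inequality is saturated precisely at $\chi(\beta,\ldots,\beta)$ also explains why, as noted in the text, the component $\chi_0$ is independent of the choice of cobordism $W$.
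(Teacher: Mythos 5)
Your proposal is correct and follows essentially the same route as the paper: apply Theorem~\ref{thm:big-extension-2} with $i=2n$ using $\Gamma_{Y_n}(2n)=\tfrac{49n}{120}$ from Proposition~\ref{prop:Gamma-Yn}, then use the $\CS$, ${\gr}$, and dimension formulas of Example~\ref{ex:chi-Yn} to rule out every component other than $\chi(\beta,\dots,\beta)$. The only difference is cosmetic: you package conditions (i) and (iii) into the single lift-independent inequality $59j+71k\geq 71n$ (equivalently the bound on ${\gr}-8\CS+\dim$), whereas the paper runs the equivalent computation as a chain of inequalities showing $\CS(\chi(\vec\sigma))>\tfrac{49n}{120}$ whenever $k<n$.
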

\begin{proof}
	By Proposition~\ref{prop:Gamma-Yn}, we know that $0 < \Gamma_{Y_n}(2n) = \frac{49n}{120} < \infty$.  The Chern--Simons functional of $Y_n$ is Morse--Bott. 
	Therefore, by Theorem~\ref{thm:big-extension-2}, we know that there is a lift $\widetilde \chi_0$ of a component $\chi_0$ of $\chi(Y_n)$ such that every element of 
	$\chi_0=\chi(\vec\sigma)$ 
	extends over $W$, 
	$\CS(\widetilde \chi_0) = \frac{49n}{120}$, and ${\gr}(\widetilde  \chi_0) + \dim(\chi_0) \geq 8n-3$. We claim that $\chi(\beta, \beta, \ldots, \beta)$ is the  only connected component that 
	can satisfy this property.  
	Suppose $\chi_0 = \chi(\vec\sigma)$ where $\sigma_i\in \{\alpha,\beta,\Theta\}$.  If we define
	%$({\rm gr}(\sigma_i),CS(\sigma_i))=(8k_i+m_i,k_i+r_i)$ where 
	\[
	  (m_i,r_i)=\left\{
	  \begin{array}{ll}
	  	\left(4,\frac{1}{120}\right)&\sigma_i=\alpha,\vspace{0.2cm}\\
		\left(8,\frac{49}{120}\right)&\sigma_i=\beta,\vspace{0.2cm}\\		
		(0,0)&\sigma_i=\Theta,\\				
	  \end{array}
	  \right.
	\]
	then as a consequence of Example \ref{connected-sum-inv}, there is an integer $l$ determining the lift $\widetilde \chi_0$ of $\chi_0$ such that 
	\[
	  \CS\left(\chi(\vec\sigma)\right)=l+\sum_{i}r_i, \hspace{1cm} {\gr}\left(\chi(\vec\sigma)\right)+\dim(\chi(\vec\sigma))+3=8 l+\sum_{i}m_i.
	\]
	Suppose that $\beta$ appears $k < n$ times in $\vec\sigma$, $\alpha$ appears $j$ times, and $\theta$ appears $n - j - k$ times.  Then, we can compute 
	\begin{align}
	  %\frac{49n}{120} &= \CS(\chi(\vec\sigma)) \nonumber \\
	\CS(\chi(\vec\sigma))&= \frac{1}{8}({\gr}(\chi(\vec\sigma))+\dim(\chi(\vec\sigma))+3)+\sum_{i} \left(r_i-\frac{m_i}{8}\right)\nonumber\\
	  &\geq n +\sum_{i} \left(r_i-\frac{m_i}{8}\right)\nonumber\\
	  &= n +k\left(\frac{49}{120}-1\right) + j\left(\frac{1}{120}-\frac{1}{2}\right)\nonumber\\
	  &>n + n\left(\frac{49}{120}-1\right) \nonumber \\
	  &= \frac{49n}{120},  \end{align}
	which is in contradiction with $\CS(\chi(\vec\sigma))=\frac{49n}{120}$. Thus $\chi(\vec\sigma) = \chi(\beta,\ldots, \beta)$.
\end{proof}

\section{Extension of representations for more general 3-manifolds}

The goal of this section is to prove Theorem \ref{thm-Gamma} and verify the technical result required to apply Theorem \ref{thm-Gamma} to the case of Whitehead double of torus knots. First we need the following proposition about the behavior of $\Gamma_Y$ under the connected sum operation.

\begin{prop}[\cite{DISST}]\label{conn-sum-ineq}
	For any pair of integer homology spheres $Y$, $Y'$ and any pair of integers $k$, $k'$, we have the following inequality:
	\[
	  \Gamma_{ Y\# Y'}(k+k') \leq \Gamma_{Y}(k) + \Gamma_{Y'} (k').
	\]
\end{prop}
\begin{proof}
	In the case of the knot invariant $\Gamma_{(Y,K)}$ of \cite{DS:equiv-asp-sing} a similar inequality is proved in \cite{DISST}. The key ingredient 
	in the proof of this inequality is the connected sum theorem proved in \cite[Section 6]{DS:equiv-asp-sing}. Similarly, combining the 
	connected sum theorem in Theorem \ref{thm:kunneth} and the algebraic arguments of \cite{DISST} gives the desired result. 
\end{proof}

\begin{proof}[Proof of Theorem \ref{thm-Gamma}]
Suppose $Y$ is given as in the statement of Theorem \ref{thm-Gamma}. The real valued quantity $\gr-8\CS$ is constant on each connected component of $\chi(Y)$, and hence it takes its maximum value on the compact set $\chi(Y)$, which we denote by $C_0$. More generally, the Chern--Simons functional of $\#_nY$ for any $n$ is Morse--Bott, and the critical set (and its gauge theoretical invariants) can be described following Example \ref{connected-sum-inv}. Furthermore, Proposition \ref{conn-sum-ineq} implies that there is a fixed positive constant $\epsilon$ independent of $n$ such that 
\begin{equation}\label{Gamma-bound}
	\Gamma_{ \#_nY}(i_0n)\leq \left(\frac{i_0}{2}-\epsilon\right)n
\end{equation}
for any positive integer $n$.  Now suppose that $W\colon \#_nY \to \#_nY$ is a definite cobordism. Theorem \ref{thm:big-extension} applied to the 3-manifold $\#_nY$ and the integer $i_0n$ asserts that there is a connected component $\chi(n)$ of $\chi(\#_nY)$ such that 
\begin{equation}\label{ineq-component}
  {\gr}(\chi(n))-8\CS(\chi(n))+ \dim(\chi(n))\geq 4i_0n-8\Gamma_Y(i_0n) - 3
\end{equation}
and any element of $\chi(n)$ regarded as an element of the character variety on the incoming end of $W$ extends to an element of the character variety of $W$. (Note that Theorem \ref{thm:big-extension} allows us to control the restriction of this extension on the outgoing end, too. However, we do not need this stronger result here.) Combining \eqref{Gamma-bound} and \eqref{ineq-component} shows that 
\begin{equation}\label{ineq-CS-G}
	{\gr}(\chi(n))-8\CS(\chi(n))\geq 8\epsilon n-3-\dim(\chi(n)).
\end{equation}

We claim that the dimension of $\chi(n)$ grows without any bound as $n$ goes to infinity. Otherwise, let $3N_0$ be an upper bound on the dimensions of $\chi(n)$. Then $\chi(n)$ is given by gluing irreducible flat connections on $k$ summands and the trivial connection on the remaining $n-k$ summands of $\#_nY$ with $k\leq N_0+1$. Using Example \ref{connected-sum-inv} and the definition of the constant $C_0$, we have
\[
  {\gr}(\chi(n))-8\CS(\chi(n))\leq kC_0,
\]
which is bounded because $k\leq N_0+1$. On the other hand, \eqref{ineq-CS-G} implies that ${\gr}(\chi(n))-8\CS(\chi(n))$ tends to infinity as $n$ goes to infinity. This contradiction implies that the dimension of $\chi(n)$ is unbounded as $n$ goes to infinity. Arguing in the same way as in the proof of Theorem \ref{thm:extension}, we can verify Theorem \ref{thm-Gamma}.
\end{proof}

Next, we verify that Theorem \ref{thm-Gamma} can be applied to the branched double cover of the Whitehead double $D(T_{p,q})$. As we mentioned in the introduction, we only need to check the Morse--Bott property.
\begin{prop}\label{Dpq-MB}
	For any $p,q$, the Chern--Simons functional of $Y_{p,q}$, 
	the branched double cover of the Whitehead double 
	of $D(T_{p,q})$ is Morse-Bott. 
\end{prop}

\begin{proof}
	We need to show that the character variety $\chi^*(Y_{p,q})$ is a smooth manifold and for any representation $\rho:\pi_1(Y_{p,q}) \to SU(2)$ representing an element of 
	$\chi^*(Y_{p,q})$, the dimension of the connected component of  $\chi^*(Y_{p,q})$ containing the class of $\rho$ agrees with $H^1(Y;{\rm ad}{\rho})$. Here  
	${\rm ad}{\rho}$ is the local coefficient system induced by the adjoint representation of $\rho$. The integer homology sphere $Y_{p,q}$ is given by $1/2$ surgery on the knot
	$T_{p,q}\#T_{p,q}$ \cite{MB:conc-dbl-br}. We use this description of $Y_{p,q}$ to obtain $\chi^*(Y_{p,q}))$ as the intersection of character varieties of two 3-manifolds with 
	torus boundary 
	over the character variety of a torus. 
	
	Any $SU(2)$-presentation of the torus $T^2$ is conjugate to a representation where the meridian $\mu$ and the longitude $\lambda$ of $T^2$ are respectively mapped to 
	$e^{2\pi i \alpha}$ and $e^{2\pi i\beta}$ for $(\alpha,\beta)\in \bR^2$. Translating $(\alpha,\beta)$ by an element of $\bZ^2$ or turning it into $(-\alpha,-\beta)$
	determines the same or conjugate representations. These actions of $\bZ^2$ and $\Z/2$ give an action of $G=\bZ^2\rtimes \bZ/2$ on 
	$\R^2$ and the quotient is the character variety of $T^2$,
	which is also called the {\it pillowcase}. We also write $\pi:\chi(T^2) \to [0,1/2]$ for the map induced by the projection $\R^2\to \R$ given by $(\alpha,\beta)\to \alpha$.

	For any knot $K$ in $S^3$, the inclusion map of the boundary torus into the knot exterior $Z(K)$ determines the {\it restriction} map
	$r:\chi(Z(K)) \to \chi(T^2)$; for any $\rho\in \chi(Z(K))$, we have $r(\rho)=(\alpha(\rho),\beta(\rho))$ where the trace of the holonomies of $\rho$ along the meridian 
	and the longitude of $K$ are respectively $\cos(2\pi\alpha(\rho))$ and $\cos(2\pi\beta(\rho))$. 
	For any $\alpha\in [0,1/2]$, there is a unique reducible representation $\theta_\alpha\in \chi(Z(K))$ such that $r(\theta_\alpha)$ is equal to the class of $(\alpha,0)$ in 
	$\chi(T^2)$. Any other representation in $\chi(Z(K))$ is irreducible and the set of all such representations is denoted by $\chi^*(Z(K))$.
	
	The character variety $\chi(Z(K))$ and the restriction map $r$ can be used 
	to determine $\chi^*(S^3_{1/n}(K))$. For any integer $n$, let $l_n$ be the subset of the pillowcase $\chi(T^2)$ determined by the line $\alpha+n\beta=0$ in $\bR^2$. 
	Then $\chi^*(S^3_{1/n}(K))$ is given by the intersection $r^{-1}(l_n)\cap \chi^*(Z(K))$. Equivalently, it is the fiber product of $\chi^*(Z(K)$ and $l_n$ over $\chi(T^2)$
	where the fiber product is taken with respect to the restriction map $r$ and the inclusion of $l_n$ in $\chi(T^2)$.
	
	In the case of $Y_{p,q}$, we can apply the above description to $K=T_{p,q}\#T_{p,q}$ and $n=2$ to characterize $\chi^*(Y_{p,q})$. The character variety $\chi(Z(T_{p,q}))$ 
	has a simple description given in \cite{Kl:SU-rep}. Using the Seifert--Van Kampen theorem, one can similarly characterize the character variety of $T_{p,q}\#T_{p,q}$
	together with the restriction map, and then check the 
	claim that the dimension of any connected component of $\chi^*(S^3_{1/2}(T_{p,q}\#T_{p,q}))$ containing a representation $\rho$ is equal to 
	the dimension of $H^1(S^3_{1/2}(T_{p,q}\#T_{p,q});{\rm ad}{\rho})$.
\end{proof}

\bibliography{references}
\bibliographystyle{hplain}
\Addresses
\end{document}